 \newtheorem{thm}{Theorem}[section]
 \newtheorem{cor}[thm]{Corollary}
 \newtheorem{prop}[thm]{Proposition}
 \theoremstyle{definition}
 \theoremstyle{remark}
 \newtheorem{rem}[thm]{Remark}
\newtheorem{example}[thm]{Example}
\newtheorem{remark}[thm]{Remark}
\numberwithin{equation}{section}
\newtheorem{theorem}{Theorem}[section]
\newtheorem{proposition}[theorem]{Proposition}
\newcommand{\vanish}[1]{\relax}
\newcommand{\N}{\mathbb{N}}
\newcommand{\ud}{\mathrm{d}}
\newcommand{\Sum}[2][\relax]{%
 \ifx#1\relax \sideset{}{_{#2}}\sum
 \else \sideset{}{^{#1}_{#2}}\sum
 \fi}
\newcommand{\Ce}{\mathrm{C}}
\DeclareMathOperator{\dom}{dom}
\DeclareMathOperator{\ran}{ran}
\newcommand{\norm}[2][\relax]{%
   \ifx#1\relax \ensuremath{\left\Vert#2\right\Vert}
   \else \ensuremath{\left\Vert#2\right\Vert_{#1}}
   \fi}
\newcommand{\sprod}[2]{\ensuremath{%
  \setbox0=\hbox{\ensuremath{#2}}
  \dimen@\ht0
  \advance\dimen@ by \dp0
  \left(\left.#1\rule[-\dp0]{0pt}{\dimen@}\,\right|#2\hspace{1pt}\right)}}
\newcounter{aufzi}
\newcounter{aufzii}
\newcounter{aufziii}
\begin{document}

\title[
Generation of subordinated holomorphic semigroups
 ]
{Generation of subordinated holomorphic semigroups via Yosida's
theorem}

\author{Alexander Gomilko}
\address{Faculty of Mathematics and Computer Science\\
Nicolas Copernicus University\\
ul. Chopina 12/18\\
87-100 Toru\'n, Poland  and  Institute of Telecommunications and
Global Information Space
\\
National Academy of Sciences of Ukraine\\
 13, Chokolivsky Blvd\\
Kiev, Ukraine 03186}

\email{gomilko@mat.umk.pl}

\author{Yuri Tomilov}
\address{
Institute of Mathematics\\
Polish Academy of Sciences\\
\'Sniadeckich 8\\
00-956 Warszawa, Poland }

\email{tomilov@mat.umk.pl}

\thanks{This work was completed with  a partial support by the NCN grant
 DEC-2011/03/B/ST1/00407 and by the EU Marie Curie IRSES program, project "AOS", No. 318910.}

\dedicatory{To Charles Batty, colleague and friend, on the
occasion \\
of his sixtieth anniversary with admiration}

\subjclass{Primary 47A60, 65J08, 47D03; Secondary 46N40, 65M12}

\keywords{holomorphic $C_0$-semigroup, Bernstein functions,
functional calculus}

\date{\today}

\begin{abstract}
Using functional calculi theory, we obtain several estimates for
$\|\psi(A)g(A)\|$, where $\psi$ is a Bernstein function, $g$
is a bounded completely monotone function and  $-A$ is the
generator of a holomorphic $C_0$-semigroup on a Banach space,
bounded on $[0,\infty)$. Such estimates are of value, in particular, in approximation theory of operator semigroups.
As a corollary, we obtain a new proof of
the fact that $-\psi(A)$ generates a holomorphic semigroup
whenever $-A$ does,  established recently in \cite{GT} by a different approach.
\end{abstract}

\date{\today}
\maketitle

\section{Introduction}

Bernstein functions  play an important role in analysis, and in
particular, in the study of L\'evy processes in probability
theory. Recently they found a number of applications in operator
and ergodic theories, mainly in issues related to rates of
convergence of semigroups and related operator families. At a core
of many applications of Bernstein functions is an abstract subordination
principle going back to Bochner, Nelson and Phillips (see \cite[p. 171]{SchilSonVon2010} for more on its historical background).
Given a Bernstein function $\psi$ and a generator $-A$ of a bounded $C_0$-semigroup on a Banach space $X$,
the principle allows one to define the operator $-\psi(A)$ which again is the generator of a bounded $C_0$-semigroup
on $X.$
Thus, it is natural to ask whether Bernstein
functions preserve other classes of (bounded) semigroups relevant
for applications such as holomorphic, differentiable or any of
their subclasses. This paper treats the permanence of the class of
holomorphic semigroups under Bernstein functions.

Recall that a positive  function $g\in
\Ce^\infty\bigl(0,\infty\bigr)$ is called {\em completely monotone} if
\[
(-1)^n  g^{(n)}(\tau) \geq 0, \qquad \tau >0,
\]
for each $n\in\N$.

A positive function $\psi\in \Ce^\infty(0,\infty)$ is called a
{\em Bernstein function} if its derivative is completely monotone.

A basic property of  Bernstein functions is that their
exponentials arise as Laplace transforms of a uniquely defined
convolution semigroups of subprobability measures. This property
is a core of the notion of subordination discussed below.

Recall that a family of Radon measures $(\mu_t)_{t\ge 0}$ on $[0,\infty)$ is called a vaguely continuous convolution
semigroup of subprobability measures if for all $t\ge 0, s \ge 0,$
\[
\mu_t([0,\infty))\le 1, \quad \mu_{t+s}=\mu_t*\mu_s, \quad
\text{and} \quad \mbox{vague}-\lim_{t\to0+}\mu_t=\delta_0,
\]
where $\delta_0$ stands for the Dirac measure at zero. Such a
semigroup is often called a subordinator. The next classical
characterization of Bernstein functions goes back to Bochner and
can be found e.g. in \cite[Theorem 5.2] {SchilSonVon2010}.
\begin{thm}\label{Bochner}
A function $\psi:(0,\infty)\to (0,\infty)$ is Bernstein if and
only if there exists a vaguely continuous convolution semigroup
$(\mu_t)_{t \ge 0}$ of subprobability measures on $[0,\infty)$
such that
\begin{equation}\label{CMonG}
\widehat \mu_t(\tau):=\int_0^\infty e^{- s \tau}\,\mu_t(ds)=e^{-t\psi(\tau)},\quad \tau > 0,
\end{equation}
for all $t \ge 0.$
\end{thm}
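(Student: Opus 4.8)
The plan is to prove both implications by routing through the \emph{complete monotonicity of the exponentials} $\tau\mapsto e^{-t\psi(\tau)}$ and then invoking the classical Hausdorff--Bernstein--Widder representation of completely monotone functions as Laplace transforms of positive Radon measures (see e.g. \cite{SchilSonVon2010}). For the harder ``only if'' direction, suppose $\psi$ is Bernstein. The key lemma is a composition rule: if $f$ is completely monotone and $\psi$ is Bernstein, then $f\circ\psi$ is completely monotone. Since $x\mapsto e^{-tx}$ is completely monotone on $(0,\infty)$ for every fixed $t\ge0$, the rule applied with $f(x)=e^{-tx}$ shows that $\tau\mapsto e^{-t\psi(\tau)}$ is completely monotone for each $t\ge0$. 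By Hausdorff--Bernstein--Widder there is then a unique positive Radon measure $\mu_t$ on $[0,\infty)$ whose Laplace transform equals $e^{-t\psi}$, which is precisely \eqref{CMonG}.

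It then remains to verify the three defining properties of a subordinator. For the mass one has $\mu_t([0,\infty))=\lim_{\tau\to0+}\widehat\mu_t(\tau)=e^{-t\psi(0+)}\le1$, since positivity of $\psi$ forces $\psi(0+)\ge0$; this gives the subprobability bound. For the semigroup law, for every $\tau>0$ one computes $\widehat{\mu_{t+s}}(\tau)=e^{-(t+s)\psi(\tau)}=\widehat\mu_t(\tau)\,\widehat\mu_s(\tau)=\widehat{\mu_t*\mu_s}(\tau)$, and the injectivity of the Laplace transform on positive measures yields $\mu_{t+s}=\mu_t*\mu_s$. For vague continuity at $0$, one has $\widehat\mu_t(\tau)\to1=\widehat{\delta_0}(\tau)$ as $t\to0+$ for each $\tau>0$, and the continuity theorem for Laplace transforms (the analogue of L\'evy's theorem) upgrades this pointwise convergence of transforms to the vague convergence $\mu_t\to\delta_0$.

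For the converse, assume a subordinator satisfying \eqref{CMonG} is given. For each $t$ the function $\widehat\mu_t$ is the Laplace transform of the positive measure $\mu_t$, hence completely monotone. Writing $1-\widehat\mu_t(\tau)=(1-\mu_t([0,\infty)))+\int_{[0,\infty)}(1-e^{-s\tau})\,\mu_t(ds)$ and dividing by $t$, I would pass to the limit $t\to0+$: since $\psi(\tau)=\lim_{t\to0+}t^{-1}(1-\widehat\mu_t(\tau))$ and each summand on the right is a Bernstein function of $\tau$ (a nonnegative constant, respectively an integral average of the Bernstein functions $\tau\mapsto1-e^{-s\tau}$), and the cone of Bernstein functions is closed under pointwise limits, it follows that $\psi$ is Bernstein. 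Alternatively, one can read off the sign conditions $(-1)^{n-1}\psi^{(n)}\ge0$ directly from $\partial_t\widehat\mu_t|_{t=0}=-\psi$ combined with the complete monotonicity of $\widehat\mu_t$.

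I expect the main obstacle to lie in the ``only if'' direction, concentrated in the composition lemma that $e^{-t\psi}$ is completely monotone: a direct proof requires controlling every derivative $\partial_\tau^n e^{-t\psi}$ through the Fa\`a di Bruno formula and exploiting the alternating sign pattern of the derivatives of $\psi$, which is delicate combinatorial bookkeeping. The supporting analytic point---justifying the vague convergence $\mu_t\to\delta_0$, and in the converse the behaviour of the rescaled measures $t^{-1}\mu_t|_{(0,\infty)}$ as $t\to0+$---rests on the continuity theorem for Laplace transforms together with a tightness argument, which is routine but must be invoked with care.
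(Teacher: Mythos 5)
Your proof is correct: both directions are sound, and the ingredients you invoke (the composition rule that $f\circ\psi$ is completely monotone when $f$ is completely monotone and $\psi$ is Bernstein, the Hausdorff--Bernstein--Widder theorem, injectivity and the continuity theorem for Laplace transforms, and closedness of the cone of Bernstein functions under pointwise limits) are exactly the classical ones. Note, however, that the paper does not prove this statement at all---it quotes it as a known result from \cite[Theorem 5.2]{SchilSonVon2010}---and your argument is essentially the standard proof given in that reference, so the two approaches coincide.
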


Theorem \ref{Bochner} has its operator-theoretical counterpart.
One of most natural ways to construct a new $C_0$-semigroup from a
given one is to use subordinators. Recall that if $(e^{-tA})_{t
\ge 0}$ is  a bounded $C_0$-semigroup on a Banach space $X$ and
$(\mu_t)_{t \ge 0}$ is a vaguely continuous convolution semigroup
of bounded Radon measures on $[0,\infty)$ then the formula
\begin{equation}
e^{-t\mathcal A}=\int_0^\infty e^{-s A}\,\mu_t(ds), \qquad t\ge 0,
\end{equation}
defines a bounded $C_0$-semigroup $(e^{-t\mathcal A})_{t \ge 0}$
on $X$ whose generator $-\mathcal A$ can be considered as $-\psi
(A)$, thus we will write $\psi(A)$ instead of $\mathcal A$ (see
the next subsection for more on that). The $C_0$-semigroup
$(e^{-t\psi(A)})_{t \ge 0}$ is called subordinated to the
$C_0$-semigroup $(e^{-tA})_{t \ge 0}$ via the subordinator
$(\mu_t)_{t \ge 0}$ (or the corresponding Bernstein function
$\psi$).

Despite the construction of subordination is very natural and
appears often in various contexts, some of its permanence
properties have not been made precise so far. In this note, we
show the permanence of semigroup holomorphicity of under
subordination.
In particular, we present a positive answer to  the following open
question posed in \cite[p. 63]{Rob}, see also \cite{Laub}: suppose
that $-A$ generates a bounded holomorphic $C_0$-semigroup on a
Banach space $X$ and $\psi$ is a Bernstein function. Does
$-\psi(A)$ also generate a (bounded) holomorphic $C_0$-semigroup ?

A partial answer to a strengthened version of this question  was
given in \cite[Proposition 7.4]{Laub}: for any Bernstein function
$\psi$ the operator $-\psi(A)$ generates a sectorially bounded
holomorphic $C_0$-semigroup of angle at least $\theta$ if $-A$
generates a sectorially bounded holomorphic $C_0$-semigroup of
angle $\theta$ greater then $\pi/4$. Moreover, it was proved in
\cite[Theorem 6.1 and Remark 6.2]{Laub} that the above claim is
true with no restrictions on $\theta \in (0,\pi/2]$ if a Bernstein
function $\psi$ is, in addition, \emph{complete}. It was asked in
\cite{Laub} whether this additional assumption can, in fact, be
removed.

If  $X$ is a uniformly convex Banach space, e.g. if $X$ is a
Hilbert space, then a positive answer to Kishimoto-Robinson's
question was obtained in \cite[Theorem 1]{MirS} using Kato-Pazy's
criteria for semigroup holomorphicity.

Recently, based on the machinery of functional calculi, positive
answers to both questions in their full generality, were provided
in  \cite{GT}. In particular, it was proved in \cite{GT} that  if
$-A$ generates a sectorially bounded holomorphic $C_0$-semigroup
of  angle $\theta$, then for any Bernstein function $\psi$ the
operator $-\psi(A)$ also generate a sectorially bounded
holomorphic $C_0$-semigroup of angle at least $\theta$.

The aim of this note is to present an alternative and
comparatively simple argument providing positive answers to the
questions from \cite{Rob} and \cite{Laub} apart from the
permanence of holomorphy angles property. (This property requires
additional arguments going beyond the scope of the paper, see
\cite{GT} for its proof.) Our approach has merits of being
self-contained, transparent and much less technical in a sense of
using only elementary properties of functional calculi theory.

The proof arises as a byproduct of  estimates for
$\|\psi(A)e^{-t\varphi(A)}\|$, $t>0$, where $\psi$, $\varphi$ are
Bernstein functions, satisfying appropriate conditions. In turn
such estimates appeared to be crucial in putting approximation
theory of operator semigroups into the framework of Bernstein
functions of semigroup generators, see \cite{GTJFA}. In fact, the
techniques developed in \cite{GTJFA} is basic in this paper.

It is not clear whether the permanence of semigroup holomorphy
sectors can be proved by the methods of present note. See however
\cite{BGTo} where still another, direct approach to subordination
was worked out in details.

\section{Preliminaries}
\subsection{Function theory}

Let us recall some basic facts on completely monotone and
Bersntein functions from \cite{SchilSonVon2010} relevant for the
following.

First, note that by Bernstein's theorem \cite[Theorem 1.4]{SchilSonVon2010} a real-valued function $g\in
\Ce^\infty(0,\infty)$ is completely monotone if and only if it is
the Laplace transform of a (necessarily unique) positive
Laplace-transformable Radon measure $\nu$ on $[0,\infty):$
\begin{equation}\label{bernst}
g(\tau) = \widehat \nu(\tau)=\int_{0}^{\infty} e^{-\tau s}\nu(ds)
\qquad \mbox{for all}\quad \tau > 0.
\end{equation}
In particular, \eqref{bernst} implies that a completely monotone
function extends holomorphically to the open right half-plane
$\mathbb C_+:=\{z\in \mathbb C: {\rm Re}\, z > 0\}.$ The set of
complete monotone functions will be denoted by $\mathcal{CM}$, and
the set ot bounded complete monotone functions will be denoted by
$\mathcal{BCM}$. The standard examples of completely monotone
functions include $e^{-t\tau}, \tau^{-\alpha},$ for fixed $t >0$
and $\alpha \in [0,1],$ and $(\log(1+\tau))^{-1}.$

Bernstein functions constitute a class ``dual'' in a sense to the
class of completely monotone functions.
A representation similar
in a sense to \eqref{bernst} holds also for Bernstein functions.
Indeed, by \cite[Thm. 3.2]{SchilSonVon2010}, a function $\psi$
is a Bernstein function  if and only if there exist  $a, b\geq 0$
and a positive Radon measure $\gamma$ on $(0,\infty)$ satisfying
\begin{equation*}
\int_{0+}^\infty\frac{s}{1+s}\,\gamma(\ud{s})<\infty \label{mu}
\end{equation*}
such that
\begin{equation}\label{predGA}
\psi(\tau)=a+b\tau+\int_{0+}^\infty
(1-e^{-s\tau})\gamma(d{s}),\qquad \tau>0.
\end{equation}
The formula \eqref{predGA} is called the L\'evy-Khintchine
representation of $\psi.$ The triple $(a, b, \gamma)$ is uniquely
determined by $\psi$ and is called the L\'evy-Khintchine triple.
Thus we will write occasionally $\psi \sim (a,b, \gamma).$
 Note that a Bernstein function
$\psi \sim (a,b, \gamma)$ is increasing,  and it satisfies
\[
a=\psi(0+)\quad \text{and} \quad b=\lim_{t \to \infty}\frac{\psi(t)}{t}.
\]
Moreover, by \eqref{predGA}, $\psi$ extends holomorphically to
$\mathbb C_+$ and continuously to the closure $\overline{\mathbb
C}_+.$ The Bernstein function $\psi$ is bounded  if and only if
$b=0$ and $\gamma ((0,\infty))<\infty$, see \cite[Corollary
3.7]{SchilSonVon2010}.

In the sequel, we will denote the of set of Bernstein functions by
$\mathcal{BF}$. As  examples of Bernstein functions we mention
$1-e^{-t\tau},$ $\tau^\alpha,$ for fixed $t >0$ and $\alpha \in
[0,1],$ and $\log (1+\tau).$

Now we introduce a functional $J$ which will be an important tool
in getting operator norm estimates for the products of functions
of a negative semigroup generator $A.$

For $g\in \mathcal{CM}$
and $\psi\in \mathcal{BF}$ let us define
\begin{equation}\label{JAT}
J[g, \psi]:=\int_0^\infty g(s)\psi'(s)\,ds.
\end{equation}
Note that $J$ is  well-defined if we allow $J[g, \psi]$ to be $\infty.$

The following choice of $g$ and $\psi$ will be of particular importance.
Observe that if   $t>0$ is fixed, $\varphi$ is a Bernstein
function, and $g=e^{-t\varphi}$  then $g \in \mathcal{BCM}$ by
Theorem \ref{Bochner} and
\begin{equation}\label{GGJ}
J[e^{-t\varphi}, \psi]=\int_0^\infty e^{-t\varphi(s)}\psi'(s)\,ds.
\end{equation}

Let us note several conditions on $g$ and $\psi$  guaranteeing that $J[g, \psi]$ is finite.

\begin{example}\label{Mon}
\,$a)$\,Let $g \in \mathcal{CM}$ and $\psi\in \mathcal{BF}$. If
there exists a continuous function $q: (0,\infty)\mapsto
(0,\infty)$ such that
\begin{equation}\label{ppCT0}
\int_0^\infty q(s)\,ds<\infty,\quad \text{and}\quad  g(s)\le
q(\psi(s)),\qquad s>0,
\end{equation}
then
\[
J[g, \psi]\le \int_0^\infty q(\psi(s)) \psi'(s)\,ds
=\int_{\psi(0)}^{\psi(\infty)} q(s)\,ds \le \int_0^\infty
q(s)\,ds<\infty.
\]

On the other hand, if  $g\in \mathcal{CM}$, $\psi\in \mathcal{BF}$
and $J[g,\psi]<\infty,$ then
\[
g(\tau)=q(\psi(\tau)),\quad z>0,\quad q(s):=g(\psi^{-1}(s)),\quad
s\in (\psi(0),\psi(\infty)),
\]
and
\begin{eqnarray*}
\int_{\psi(0)}^{\psi(\infty)} q(s)\,ds
&=&\int_{\psi(0)}^{\psi(\infty)} g(\psi^{-1}(s))\,dt
\\
&=&\int_0^\infty g(\psi^{-1}(\psi(s))\,d\psi(s) =\int_0^\infty
g(s)\psi'(s)\,ds<\infty.
\end{eqnarray*}
Thus, \eqref{ppCT0} is also necessary (in a sense described above)
for $J[g, \psi]<\infty.$

$b)$\, Let $g \in \mathcal {BCM}$ be such that $g(0)\le 1$ and
$g(\infty)=0,$  and let $\psi\in \mathcal{BF}$. Suppose that
there exists a continuous function $f: (0,1)\mapsto (0,\infty)$
such that
\begin{equation}\label{ppCT1}
\int_0^1 f(s)\,ds<\infty,\quad \text{and} \quad \psi(s)\le
f(g(s)),\qquad s>0.
\end{equation}
Then
\begin{equation}\label{JLem}
J[g, \psi]\le \int_0^1 f(s)\,ds.
\end{equation}
Indeed, note that $g'(s)<0,$ $s < 0.$ Then, by (\ref{ppCT1}),  for
all $\epsilon>0$ and $\tau>1,$
\begin{eqnarray}
\int_\epsilon^{\tau} g(s) \psi'(s)\,ds
&=& g(\tau) \psi(\tau)- g(\epsilon) \psi(\epsilon)-\int_\epsilon^{\tau}  g'(s)\psi(s)\,ds \label{limACT}\\
&\le&  g(\tau) f(g(\tau))-\int_\epsilon^{\tau}  g'(s)f(g(s))\,ds\notag \\
&=& g(\tau) f(g(\tau))+\int_{g(\tau)}^{g(\epsilon)} f(s)\,ds\notag\\
&\le& g(\tau) f(g(\tau))+\int_0^1 f(s)\,ds.\notag
\end{eqnarray}
Note that $g(\tau)$ decrease to zero monotonically as $\tau \to
\infty.$ Since $f\in L^1(0,1)$ there exists $(\tau_k)_{k\ge
1}\subset (1,\infty)$ such that
\[
 \lim_{k\to\infty}\,\tau_k=\infty,\quad \text{and}\quad
\lim_{k\to\infty}\,g(\tau_k)f(g(\tau_k))=0.
\]
Since $g$ and $\psi'$ are positive,  setting $\tau=\tau_k, k\in
\N,$ in (\ref{limACT}) and letting $k\to\infty$ and $\epsilon\to
0$, we obtain (\ref{JLem}).
\end{example}

We proceed with several estimates for $J[g,\psi],$ where $g$ is of
the form $e^{-t\varphi}, t >0,$ for a Bernstein function
$\varphi$. They will be important for exploring holomorphicity of
$(e^{-t\varphi(A)})_{ t \ge 0}$ in the next section.

\begin{example}\label{ExT}\,
$a)$\,For any $\psi\in \mathcal{BF},$ we have
\begin{eqnarray}
J[e^{-t\psi},\psi]&=&\int_0^\infty e^{-t\psi(s)}\psi'(s)\,ds
\label{ref3}\\
&=&t^{-1}[e^{-t\psi(0)}-e^{-t\psi(\infty)}]\le t^{-1},\qquad
t>0.\notag
\end{eqnarray}

$b)$\, If $\psi\in \mathcal{BF}$ and
$\varphi_\alpha(\tau):=\tau^\alpha$, $\alpha\in (0,1]$, then using
monotonicity of $\psi$ and the fact that
\begin{equation}\label{max}
\psi(c\tau)\le c\psi(\tau),\qquad \tau\ge 0,\quad c \ge 1,
\end{equation}
 see e.g. \cite[p. 205]{Jacob}, it follows that
\begin{eqnarray}\label{q4}
J[e^{-t\varphi_\alpha},\psi]+\psi(0)&=& t\alpha\int_0^\infty e^{-ts^\alpha}s^{\alpha-1}\psi(s)\,ds\\
&=&\alpha \int_0^\infty e^{-s^\alpha} s^{\alpha-1} \psi(s/t^{1/\alpha})\,ds \notag \\
&\le& \psi(1/t^\alpha)\int_0^\infty
e^{-s}\max \{1,s^{1/\alpha}\}\,ds \notag \\
&\le& \left(1+\frac{1}{\alpha
e}\right)\,\psi(1/t^\alpha),\qquad t>0.\notag
\end{eqnarray}
Let now $\psi \sim (a,b,\gamma)$ and $\alpha=1$ so that
$\varphi_1(\tau)=\tau.$ Then using (\ref{predGA}), the inequality
\[
\frac{s}{t+s}=\frac{s/t}{1+s/t}\le 1-e^{-s/t},\qquad s,t>0,
\]
and Fubini's theorem, we infer that
\begin{eqnarray}\label{q5}
J[e^{-t\varphi_1},\psi]&=& \int_0^\infty e^{-tz}\psi'(s)\,ds
=\frac{b}{t}+\int_{0+}^\infty \frac{s}{t+s}\,\gamma(ds)
\\
&\le& \frac{b}{t}+ \int_{0+}^\infty
(1-e^{-s/t})\,\gamma(ds)=\psi(1/t)-\psi(0)\notag \\
&\le& \psi(1/t),\qquad t>0. \notag
\end{eqnarray}

The following estimate for $J$ generalizes the one in a).

$c)$\, Let $\psi$ be a bounded Bernstein  function satisfying
\begin{equation}\label{CGG}
\psi(0)=0,\qquad \psi'(0+)<\infty,
\end{equation}
and let $\varphi$ be a Bernstein function. Then,
\[
J[e^{-t\varphi},\psi]=\int_0^\infty
e^{-t\varphi(s)}\psi'(s)\,ds\le \psi(\infty),\qquad t>0.
\]
On the other hand, if we are interested in asymptotics of
$J[e^{-t\varphi},\psi]$ for big $t$ and $\varphi\not\equiv
\mbox{const}$, then a better estimate is available. Since
\[
\varphi(\tau)=\int_0^\tau \varphi'(s)\,ds+\varphi(0)\ge\varphi'(1)
\tau,\qquad \tau\in (0,1),
\]
it follows that
\begin{eqnarray*}
J[e^{-t\varphi},\psi]&=&\int_0^1
e^{-t\varphi(s)}\psi'(s)\,ds+\int_1^\infty e^{-t\varphi(s)}\psi'(s)\,ds\\
&\le& \psi'(0)\int_0^1
e^{-t\varphi'(1)s}\,ds+e^{-t\varphi(1)}\int_1^\infty \psi'(s)\,ds\\
&\le& \left[\frac{\psi'(0)}{\varphi'(1)}+
\frac{\psi(\infty)-\psi(1)}{\varphi(1)}\right]\frac{1}{t},\qquad t>0.
\end{eqnarray*}
\end{example}

We finish this subsection with several estimates shading a light
on interplay between the functional $J[g,\psi]$ and the product
$g\cdot \psi.$ They will be needed as an illustration of our main
statement.

The following estimate is well-known for so-called complete
Bernstein functions. However, it seems, it has not been noted for
the whole class of Bernstein functions. In the proof, we use an
idea from the proof of  \cite[Theorem 4]{Carasso}.

\begin{proposition}\label{EstB}
Let $\psi\in \mathcal{BF}$. Then
\begin{equation}\label{Eb1}
|\psi(z)|\le 2\sigma^{-1}\varphi(|z|),\quad {\rm Re}\,z\ge 0,\quad \sigma=1-e^{-1}.
\end{equation}
\end{proposition}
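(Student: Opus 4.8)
The plan is to argue directly from the L\'evy--Khintchine representation \eqref{predGA}. Since $\psi$ extends holomorphically to $\mathbb{C}_+$ and continuously to $\overline{\mathbb{C}}_+$ via
\[
\psi(z)=a+bz+\int_{0+}^\infty (1-e^{-sz})\,\gamma(ds),\qquad {\rm Re}\,z\ge 0,
\]
the triangle inequality gives
\[
|\psi(z)|\le a+b|z|+\int_{0+}^\infty |1-e^{-sz}|\,\gamma(ds),
\]
so that everything reduces to comparing the integrand $|1-e^{-sz}|$ with the integrand $1-e^{-s|z|}$ of $\psi(|z|)$.

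The heart of the matter is the scalar estimate
\[
|1-e^{-w}|\le 2\sigma^{-1}(1-e^{-|w|}),\qquad {\rm Re}\,w\ge 0,\quad \sigma=1-e^{-1},
\]
which I would prove by splitting into the regimes $|w|\le 1$ and $|w|\ge 1$. For $|w|\ge 1$ one bounds crudely $|1-e^{-w}|\le 1+e^{-{\rm Re}\,w}\le 2$, while $1-e^{-|w|}\ge 1-e^{-1}=\sigma$, which yields the claim with the constant $2\sigma^{-1}$. For $|w|\le 1$, writing $1-e^{-w}=w\int_0^1 e^{-tw}\,dt$ and using ${\rm Re}\,w\ge 0$ gives the linear bound $|1-e^{-w}|\le |w|$; on the other hand the function $x\mapsto (1-e^{-x})/x$ is the average of $e^{-t}$ over $[0,x]$ and hence decreasing on $(0,\infty)$, so $1-e^{-x}\ge \sigma x$ for $x\in(0,1]$, and therefore $|1-e^{-w}|\le |w|\le \sigma^{-1}(1-e^{-|w|})$. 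Combining the two regimes gives the displayed inequality.

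Finally, applying this estimate under the integral sign with $w=sz$ (so that $|w|=s|z|$), and noting that $2\sigma^{-1}>1$, which lets me absorb the harmless terms $a$ and $b|z|$ into the same factor, yields
\[
|\psi(z)|\le 2\sigma^{-1}\left(a+b|z|+\int_{0+}^\infty (1-e^{-s|z|})\,\gamma(ds)\right)=2\sigma^{-1}\psi(|z|),
\]
as required. The main obstacle is obtaining the scalar bound \emph{uniformly} over the closed right half-plane: the naive contour estimate $|1-e^{-w}|\le (1-e^{-|w|\cos\theta})/\cos\theta$, where $\theta=\arg w$, grows like $|w|$ as $\theta\to\pm\pi/2$ and therefore cannot be controlled by the bounded quantity $1-e^{-|w|}$ when $|w|$ is large. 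This is exactly why the case split — using the crude bound $|1-e^{-w}|\le 2$ for $|w|\ge 1$ and the linear bound for $|w|\le 1$ — is needed to keep the constant finite.
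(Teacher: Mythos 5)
Your proposal is correct and follows essentially the same route as the paper: both reduce the claim via the L\'evy--Khintchine representation to the scalar estimate $|1-e^{-w}|\le 2\sigma^{-1}(1-e^{-|w|})$ on $\overline{\mathbb{C}}_+$ and absorb the terms $a+b|z|$ using $1<2\sigma^{-1}$. The only difference is that you prove the elementary inequalities $|1-e^{-w}|\le\min(|w|,2)$ and $1-e^{-s}\ge\sigma\min(s,1)$ from scratch (via the case split at $|w|=1$), where the paper simply cites Lemma 2.1.2 of Jacob's book.
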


\begin{proof}
Recall that
\[
|1-e^{-z}|\le \min (|z|,2)\le 2\min (|z|,1),\quad {\rm Re}\,z\ge 0,
\]
and
\[
1-e^{-s}\ge \sigma \min (s,1),\quad s\ge 0,\quad \sigma=1-e^{-1},
\]
see  \cite[Lemma 2.1.2]{Jacob}. Therefore,
\begin{equation}\label{sigma}
|1-e^{-z}|\le 2\sigma^{-1}(1-e^{-|z|}),\quad {\rm Re}\,z\ge 0.
\end{equation}
Let $\psi\in \mathcal{BF}$ be given by (\ref{predGA}). Then,
using (\ref{sigma}) and noting that $1<2\sigma^{-1}$, we obtain
\begin{eqnarray*}
|\psi(z)|&\le& a +b|z|+\int_{0+}^\infty |1-e^{-sz}|\,\gamma(ds)\\
&\le& a +b|z|+2\sigma^{-1}\int_{0+}^\infty
(1-e^{-|z|s})\,\gamma(ds)\\
&\le& 2\sigma^{-1}\psi(|z|),\qquad {\rm Re}\,z\ge 0.
\end{eqnarray*}
\end{proof}

In the following result, we show that for  $g \in \mathcal {BCM}$
and $\psi\in\mathcal{BF}$ the assumption $J[g, \psi]<\infty$
implies that $g\cdot\psi$ is bounded in any sector
\[
\Sigma_\beta:=\{z\in \mathbb C: |\arg z|<\beta \},\qquad \beta\in (0,\pi/2).
\]

\begin{cor}\label{CorLaub10}
Let  $\psi\in\mathcal{BF}$. Then the
following statements hold.
\begin{itemize}
\item [(i)]  For every $g \in \mathcal {CM}$ and every $\beta\in (0,\pi/2),$
\begin{equation}\label{ResCA}
|g(z)\psi(z)|\le \frac{2}{\sigma \cos\gamma}\,
g(|z|\cos\beta)\psi(|z|\cos\beta), \quad  z\in \Sigma_\beta.
\end{equation}
\item [(ii)] Let $g \in \mathcal {BCM}$ and $J[g, \psi]<\infty$.  Then for every $\beta\in (0,\pi/2),$
\begin{equation}\label{ResC}
|g(z)\psi(z)|\le
\frac{2}{\sigma\cos\beta}\,\{g(0+)\psi(0)+J[g,\psi]\}, \qquad z\in
\Sigma_\beta.
\end{equation}
\end{itemize}
\end{cor}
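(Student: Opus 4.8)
The plan is to prove both parts by exploiting the holomorphic extension of completely monotone and Bernstein functions to the right half-plane together with the elementary geometric fact that for $z\in\Sigma_\beta$ the real part $\re z$ is comparable to $|z|$, namely $\re z\ge |z|\cos\beta$. The key observation is that a completely monotone function $g$ is, by \eqref{bernst}, the Laplace transform of a positive measure $\nu$, and Bernstein functions admit the representation \eqref{predGA}; both representations make the behaviour on a ray $z=|z|e^{\imath\theta}$ controllable by the behaviour at the real point $\re z$.

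For part (i), I would first establish the pointwise domination $|g(z)|\le g(\re z)$ for every $g\in\mathcal{CM}$ and $\re z>0$. This follows immediately from \eqref{bernst}: writing $z=x+\imath y$ with $x=\re z>0$, one has
\begin{equation*}
|g(z)|=\left|\int_0^\infty e^{-zs}\,\nu(ds)\right|\le\int_0^\infty e^{-xs}\,\nu(ds)=g(x).
\end{equation*}
Combining this with $\re z\ge|z|\cos\beta$ and the monotonicity of $g$ (as the Laplace transform of a positive measure, $g$ is decreasing on $(0,\infty)$) gives $|g(z)|\le g(|z|\cos\beta)$ for $z\in\Sigma_\beta$. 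For the Bernstein factor I would invoke Proposition \ref{EstB}, which yields $|\psi(z)|\le 2\sigma^{-1}\psi(|z|)$, and then use $\psi(|z|)\le (\cos\beta)^{-1}\psi(|z|\cos\beta)$, which is the inequality \eqref{max} read with $c=(\cos\beta)^{-1}\ge 1$. Multiplying the two bounds produces \eqref{ResCA}; I note the statement writes $\cos\gamma$ in the denominator, which I read as a typo for $\cos\beta$.

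For part (ii), the additional hypothesis $J[g,\psi]<\infty$ and $g\in\mathcal{BCM}$ should be used to replace the product $g(|z|\cos\beta)\psi(|z|\cos\beta)$ appearing in (i) by the \emph{finite} quantity $g(0+)\psi(0)+J[g,\psi]$. The plan is to show that for every $x>0$,
\begin{equation*}
g(x)\psi(x)\le g(0+)\psi(0)+J[g,\psi].
\end{equation*}
This is exactly the kind of estimate carried out in Example \ref{Mon}(b): integrating by parts,
\begin{equation*}
g(x)\psi(x)=g(0+)\psi(0)+\int_0^x g(s)\psi'(s)\,ds+\int_0^x g'(s)\psi(s)\,ds,
\end{equation*}
and since $g$ is completely monotone we have $g'\le 0$ while $\psi\ge 0$, so the last integral is nonpositive; the middle integral is bounded by $J[g,\psi]$ by definition. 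Taking $x=|z|\cos\beta$ and feeding this into \eqref{ResCA} (after absorbing the factor $g$'s monotonicity already used in (i)) yields \eqref{ResC}.

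The step I expect to require the most care is the interchange between the two uses of monotonicity and the integration by parts at the endpoints $0+$ and $\infty$: one must verify that the boundary terms behave correctly, in particular that $g(0+)$ is finite (guaranteed by $g\in\mathcal{BCM}$) and that no divergence is hidden in $\psi(0)$ (finite since $a=\psi(0+)$ by \eqref{predGA}). The limiting argument as $x\to\infty$ that underlies Example \ref{Mon}(b) — choosing a sequence along which the product term vanishes — is the genuine analytic content, whereas the rest is the routine geometry of sectors. Since here I only need the bound at the single point $x=|z|\cos\beta$ rather than a limit, the argument is in fact simpler than the one in Example \ref{Mon}(b), and the finiteness of $J[g,\psi]$ enters only to guarantee the right-hand side of \eqref{ResC} is finite.
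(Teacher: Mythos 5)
Your proposal is correct and follows essentially the same route as the paper: part (i) via the Laplace representation \eqref{bernst}, the bound $\re z\ge|z|\cos\beta$, Proposition \ref{EstB}, and inequality \eqref{max}; part (ii) via the pointwise bound $g(\tau)\psi(\tau)\le g(0+)\psi(0)+J[g,\psi]$ obtained by integrating the product rule and discarding the nonpositive $\int g'\psi$ term, then feeding $\tau=|z|\cos\beta$ into (i). You are also right that $\cos\gamma$ in \eqref{ResCA} is a typo for $\cos\beta$, exactly as the paper's own proof confirms.
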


\begin{proof}
To prove (i) suppose that $g$ is given by (\ref{bernst}) and $z\in \Sigma_\beta.$ Then
\[
|g(z)|\le \int_0^\infty e^{-s{\rm Re}\,z}\,\nu(ds) \le
\int_0^\infty e^{-s|z|\cos\beta}\,\nu(ds) =g(|z|\cos\beta).
\]
Using Proposition \ref{EstB} and the inequality
\eqref{max}, we then obtain
\[
|g(z)\psi(z)|\le 2\sigma^{-1}g(|z|\cos\beta)\psi(|z|) \le
\frac{2}{\sigma \cos\beta}\, g(|z|\cos\beta)\psi(|z|\cos\beta),
\]
and the proof is complete.

If $J[g, \psi]<\infty,$ then, since $g$ is decreasing, for every
$\tau>0,$
\begin{eqnarray*}
g(\tau)\psi(\tau)&=&g(0+)\psi(0)+\int_0^\tau [g'(s)\psi(s)+g(s)\psi'(s)]\,ds\\
&\le& g(0+)\psi(0)+\int_0^\tau g(s)\psi'(s)]\,ds\le
g(0+)\psi(0)+J[g, \psi].
\end{eqnarray*}
Hence, by (i),
\[
|g(z)\varphi(z)|\le \frac{2}{\sigma
\cos\beta}\,\{g(0+)\phi(0)+J[g,\psi]\},\qquad z\in \Sigma_\beta,
\]
so that (ii) holds.
\end{proof}

\subsection{Functional calculus and holomorphic semigroups}

In this subsection we will set up the extended Hille-Phillips
functional calculus. The calculus will enable us to define
Bernstein functions of a negative semigroup generator and to
establish some of their basic properties including operator
counterparts of the formulas \eqref{CMonG} and \eqref{predGA}. As
we will see below, the formulas remain essentially the same upon
replacement an independent variable by an operator $A.$

Let ${\rm M_b}(\mathbb R_+)$ be a Banach algebra of bounded Radon
measures on $\mathbb R_+:=[0,\infty)$  with the standard, total
variation norm $\|\mu\|_{{\rm M_b}(\mathbb R_+)}:=|\mu|(\mathbb
R_+).$ Note that
\[
{\rm A}^1_+(\mathbb C_+) := \{ \widehat \mu : \mu \in {\rm
M}_b(\mathbb R_+)\}
\]
is also a commutative Banach algebra with pointwise multiplication
and with the norm inherited from ${\rm A}^1_+(\mathbb C_+):$
\begin{equation}\label{mmm}
\|\rm \hat \mu\|_{{\rm A}^1_+(\mathbb C_+)} := \|\mu\|_{{\rm
M_b}(\mathbb R_+)}.
\end{equation}

Let $-A$ be the generator of a bounded $C_0$-semigroup
$(e^{-tA})_{t\ge 0}$ on $X$. Define an  algebra homomorphism
$\Phi: {\rm A}^1_+(\mathbb C_+) \mapsto {\mathcal L}(X)$ by the
formula
\[ \Phi (\widehat {\mu})x := \int_0^{\infty} e^{-sA}x\, \mu(d{s}),
\qquad x \in
 X.
\]
Since
\begin{equation}\label{hillestimate}
 \| \Phi (\hat {\mu})\| \le \sup_{t \ge 0} \|e^{-tA}\| |\mu|(\mathbb R_+),
\end{equation}
$\Phi$ is clearly continuous.
The homomorphism $\Phi$ is called
the {\em Hille-Phillips} (HP-) functional calculus for $A$. If $g
\in {\rm A}^1_+(\mathbb C_+)$ so that $g=\widehat \mu$ for $\mu
\in {\rm M_b}(\mathbb R_+),$ we then  put
\begin{equation*}
g(A)= \Phi(\widehat {\mu}).
\end{equation*}
Basic properties of the Hille-Phillips functional calculus can be
found in \cite[Chapter XV]{HilPhi} and in \cite[Chapter 3.3]{Haa2006}. It is
crucial to note that if $g \in \mathcal {BCM}$, then $g\in
A^1_+(\mathbb C_+)$ by Fatou's theorem, so that $g(A)$ is defined
in the HP-calculus and $g(A) \in \mathcal L(X).$

 Let now $O(\mathbb C_+)$ be an algebra of functions holomorphic in $\mathbb C_+.$
 Denote by ${\rm A}^{1}_{+,r} (\mathbb C_+)$ the set of $f \in O(\mathbb C_+)$ such that there exists
 $e\in {\rm A}^1_{+}(\mathbb C_+)$ with $ef \in {\rm A}^1_+(\mathbb C_+)$ and the
operator $e(A)$ is injective. Then  for any $f \in {\rm
A}^{1}_{+,r} (\mathbb C_+)$ one defines $f(A)$ as
\begin{eqnarray}\label{general}
f(A):=(e(A))^{-1} ef(A).
\end{eqnarray}
The above definition does not depend on the choice of a
regularizer $e$, and thus the mapping $f \to f(A)$ is
well-defined. We will call this mapping the {\em extended
Hille--Phillips calculus} for $A$.

The extended HP-calculus satisfies, in particular, the following,
natural sum and product rules, see e.g. \cite[Chapter 1]{Haa2006}.

\begin{prop}\label{fcrules}
Let $f$ and $g$ belong to ${\rm A}^{1}_{+,r} (\mathbb C_+)$, and let $-A$ be the generator of a bounded $C_0$-semigroup.
Then
\begin{itemize}
\item [(i)] $f(A)g(A) \subset (fg)(A);$
\item [(ii)] $f(A)+g(A)\subset (f+g)(A);$
\end{itemize}
If $g(A)$ is bounded then the inclusions above are, in fact,
equalities.
\end{prop}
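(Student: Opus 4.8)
The plan is to reduce both rules to purely algebraic identities inside the Hille--Phillips calculus $\Phi$, which is a homomorphism of the \emph{commutative} Banach algebra ${\rm A}^1_+(\mathbb C_+)$; the only structural facts I need are that operators in the range of $\Phi$ commute with one another and that a product of injective such operators is again injective. Fix regularizers $e$ for $f$ and $d$ for $g$, so that $ef,\,dg\in{\rm A}^1_+(\mathbb C_+)$ and $e(A),\,d(A)$ are injective, and recall that then $f(A)=e(A)^{-1}(ef)(A)$ and $g(A)=d(A)^{-1}(dg)(A)$.

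First I would observe that $ed$ is a common regularizer for both $fg$ and $f+g$: indeed $(ed)(fg)=(ef)(dg)$ and $(ed)(f+g)=d(ef)+e(dg)$ both lie in ${\rm A}^1_+(\mathbb C_+)$ because it is an algebra, while $(ed)(A)=e(A)d(A)$ is injective as a product of injective operators. This pins down the right-hand sides as $(fg)(A)=(e(A)d(A))^{-1}(ef)(A)(dg)(A)$ and $(f+g)(A)=(e(A)d(A))^{-1}\big(d(A)(ef)(A)+e(A)(dg)(A)\big)$, using multiplicativity of $\Phi$ throughout.

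For the inclusion (i) I would take $x\in\dom(f(A)g(A))$, set $y=g(A)x$ (so $d(A)y=(dg)(A)x$) and $z=f(A)y$ (so $e(A)z=(ef)(A)y$), and then compute $(ed\,fg)(A)x=(ef)(A)(dg)(A)x=(ef)(A)d(A)y=d(A)e(A)z=e(A)d(A)z$ using only commutativity. This exhibits $(ed\,fg)(A)x\in\ran(e(A)d(A))$, whence $x\in\dom((fg)(A))$ and $(fg)(A)x=z=f(A)g(A)x$. The inclusion (ii) follows the same pattern: for $x\in\dom(f(A))\cap\dom(g(A))$ one rewrites $d(A)(ef)(A)x=e(A)d(A)f(A)x$ and $e(A)(dg)(A)x=e(A)d(A)g(A)x$, adds the two, and reads off that $x\in\dom((f+g)(A))$ with value $f(A)x+g(A)x$.

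Finally, for the equalities under the hypothesis $g(A)\in\mathcal{L}(X)$, the key extra fact is that $\dom(g(A))=X$ and $d(A)g(A)=(dg)(A)$ holds on all of $X$. Starting from $x\in\dom((fg)(A))$, I would write $(ed\,fg)(A)x=e(A)d(A)w$ with $w=(fg)(A)x$, compute $d(A)(ef)(A)g(A)x=(ef)(A)(dg)(A)x=(ed\,fg)(A)x=d(A)e(A)w$, and then \emph{cancel} the injective operator $d(A)$ to obtain $(ef)(A)g(A)x=e(A)w\in\ran(e(A))$; this yields $g(A)x\in\dom(f(A))$ and $f(A)g(A)x=w$, giving the reverse inclusion. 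The argument for (ii) is identical, again exploiting injectivity of $d(A)$ to cancel. I expect the main obstacle to be bookkeeping rather than depth: keeping the several domains straight and making sure each cancellation is justified by injectivity of $e(A)$, $d(A)$ (and their product) and not by any boundedness that has not been assumed.
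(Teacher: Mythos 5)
Your proof is correct. A point of comparison worth making explicit: the paper itself gives \emph{no} proof of Proposition \ref{fcrules} --- it quotes the sum and product rules as standard facts with a citation to Haase's book (\cite[Chapter 1]{Haa2006}). What you have written is essentially the canonical argument from that reference (abstract functional calculus via regularization), specialized to the Hille--Phillips setting: exhibit $ed$ as a common regularizer, so that $(fg)(A)$ and $(f+g)(A)$ are computed as $(e(A)d(A))^{-1}$ applied to elements of the range of the primary calculus $\Phi$; then every step is either multiplicativity/linearity of $\Phi$, commutativity of operators in $\ran(\Phi)$, or cancellation of the injective operators $e(A)$, $d(A)$. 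All of your individual steps check out, including the well-definedness appeal (independence of the regularizer, which the paper grants) and the two cancellation arguments for the equality statements; in particular the identity $d(A)g(A)=(dg)(A)$ on all of $X$ is exactly the extra ingredient that boundedness of $g(A)$ supplies, and the inclusions (i), (ii) correctly track the domains $\dom(f(A)g(A))=\{x\in\dom(g(A)):g(A)x\in\dom(f(A))\}$ and $\dom(f(A))\cap\dom(g(A))$.

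One small interpretive caveat: your equality argument reads ``$g(A)$ is bounded'' as $g(A)\in\mathcal{L}(X)$, i.e.\ bounded \emph{and everywhere defined}. That is indeed the intended meaning here (it is how the paper uses the proposition, e.g.\ for $g\in\mathcal{BCM}$, where $g(A)$ is defined on all of $X$ by the primary HP-calculus, and it is the hypothesis in Haase's formulation). If one instead read ``bounded'' as merely bounded on its domain, one would need the additional observation that $g(A)$ is closed (so its domain is closed) together with density of $\dom(g(A))$ to conclude $\dom(g(A))=X$; it would be worth one sentence in a final write-up to fix this reading, but it is not a gap in the mathematics you actually use.
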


Recall  that, as it was shown in \cite[Lemma 2.5]{GHT}, Bernstein
functions are regularisable by $e(z)=1/(1+z),$ that is $e\psi \in
{\rm A}^1_+(\mathbb C)$ for every Bernstein function $\psi,$ and
then, in particular, by the HP-calculus,
\begin{equation}\label{fcinclusion}
[\psi(z) (1+z)^{-1}](A) \in \mathcal L (X).
\end{equation}
Thus, according to \eqref{general}, for any $\psi \in
\mathcal{BF},$
\begin{equation}\label{part}
\psi(A)=(1+A)[\psi(z) (1+z)^{-1}](A).
\end{equation}

While Bernstein functions can  formally be defined in the extended
$HP$-calculus by \eqref{general}, this definition can hardly be
used for practical purposes. However, following analogy to the
scalar-valued case, one can derive representations for operator
Bernstein functions similar to  \eqref{CMonG} and \eqref{predGA},
see e.g. \cite[Corollary 2.6]{GHT} and \cite[Proposition 2.1 and
Theorem 12.6] {SchilSonVon2010}.

\begin{thm}\label{BochP}
Let $-A$ generate a bounded $C_0$-semigroup $(e^{-tA})_{t \ge 0}$
on  $X$, and let $\psi$ be a Bernstein function with the
corresponding L\'evy-Hintchine triple $(a, b,\gamma).$  Then
the following statements hold.
\begin{itemize}
\item [(i)]  For every  $x\in \dom(A),$
 \begin{equation}\label{clos}
\psi(A)x = ax + bAx + \int_{0+}^\infty (1 - e^{-sA})x \,\gamma(d{s}),
\end{equation}
where the integral is understood as a Bochner integral. Moreover,
${\dom}(A)$ is  core for $\psi (A).$
\item [(ii)] The operator $-\psi(A)$ generates a bounded $C_0$-semigroup $(e^{-t\psi(A)})_{t \ge 0}$
on $X$ given by
\begin{equation}\label{entt}
e^{-t\psi(A)}:=\int_{0}^{\infty} e^{-sA}\, \mu_t(ds), \qquad t \ge
0,
\end{equation}
where $(\mu_t)_{t\ge 0}$ is a vaguely continuous convolution
semigroup   of subprobability measures on $[0,\infty)$
corresponding to $\psi$ by \eqref{CMonG}.
\end{itemize}
\end{thm}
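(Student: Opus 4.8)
The plan is to transfer the two scalar descriptions of a Bernstein function — the L\'evy--Khintchine formula \eqref{predGA} and the Laplace identity \eqref{CMonG} — to the operator level by means of the Hille--Phillips homomorphism $\Phi$ and the regularisation identity \eqref{part}. Throughout set $M:=\sup_{t\ge0}\|e^{-tA}\|$. For (i) I would first verify that the candidate operator
\[
Bx:=ax+bAx+\int_{0+}^\infty(1-e^{-sA})x\,\gamma(ds),\qquad x\in\dom(A),
\]
is well defined, and then identify $B$ with $\psi(A)$ by applying the regulariser $(1+z)^{-1}$. For (ii) I would show that $T(t):=\int_0^\infty e^{-sA}\,\mu_t(ds)$ is a bounded $C_0$-semigroup and compute its resolvent to recognise its generator as $-\psi(A)$.

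For the well-definedness in (i), note that for $x\in\dom(A)$ one has $(1-e^{-sA})x=\int_0^s e^{-uA}Ax\,du$, so that $\|(1-e^{-sA})x\|\le(1+M)\min\{\|x\|,\,s\|Ax\|\}$, which is dominated by a constant multiple of $\tfrac{s}{1+s}(\|x\|+\|Ax\|)$; as $\int_{0+}^\infty\tfrac{s}{1+s}\,\gamma(ds)<\infty$, the Bochner integral converges. To identify $B$ with $\psi(A)$ I would expand $\psi(z)(1+z)^{-1}$ via \eqref{predGA},
\[
\frac{\psi(z)}{1+z}=\frac{a}{1+z}+\frac{bz}{1+z}+\int_{0+}^\infty\frac{1-e^{-sz}}{1+z}\,\gamma(ds),
\]
observe that each summand lies in $\mathrm{A}^1_+(\mathbb{C}_+)$, the measure representing $(1-e^{-sz})/(1+z)$ having total variation $2(1-e^{-s})=O(\min\{s,1\})$ and hence integrable against $\gamma$, and apply the bounded homomorphism $\Phi$ termwise, interchanging $\Phi$ with the $\gamma$-integral, to get
\[
[\psi(z)(1+z)^{-1}](A)x=a(1+A)^{-1}x+bA(1+A)^{-1}x+\int_{0+}^\infty(1+A)^{-1}(1-e^{-sA})x\,\gamma(ds).
\]
For $x\in\dom(A)$ the right-hand side lies in $\dom(A)$, so applying the closed operator $1+A$ and pulling it inside the integral (Hille's theorem for closed operators) yields $(1+A)[\psi(z)(1+z)^{-1}](A)x=Bx$; by \eqref{part} this is exactly $\psi(A)x$.

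For (ii), writing $T(t)=\Phi(\widehat{\mu_t})$ gives $\|T(t)\|\le M\|\mu_t\|\le M$ by \eqref{hillestimate}. Since $\widehat{\mu_t}\,\widehat{\mu_s}=\widehat{\mu_{t+s}}$ and $\widehat{\delta_0}\equiv1$, the homomorphism property of $\Phi$ makes $(T(t))_{t\ge0}$ a semigroup with $T(0)=I$. Strong continuity I would obtain from the splitting
\[
T(t)x-x=\int_0^\infty(e^{-sA}x-x)\,\mu_t(ds)+(e^{-ta}-1)x,
\]
estimating the integral over $[0,\delta]$ by $\sup_{s\le\delta}\|e^{-sA}x-x\|$ and over $[\delta,\infty)$ by $2M\|x\|\,\mu_t([\delta,\infty))\to0$, the latter using the vague convergence $\mu_t\to\delta_0$. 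Let $-B$ denote the generator. For $\lambda>0$, Fubini's theorem and \eqref{CMonG} give
\[
\int_0^\infty e^{-\lambda t}T(t)\,dt=\Big[\int_0^\infty e^{-\lambda t}e^{-t\psi}\,dt\Big](A)=\Big[\tfrac{1}{\lambda+\psi}\Big](A),
\]
and since $1/(\lambda+\psi)\in\mathcal{BCM}$ is bounded, Proposition \ref{fcrules} identifies $[\,1/(\lambda+\psi)\,](A)$ with $(\lambda+\psi(A))^{-1}$. Hence $(\lambda+B)^{-1}=(\lambda+\psi(A))^{-1}$ and $B=\psi(A)$. Finally, $\dom(A)$ is dense and $T(t)$-invariant — because $A$ commutes with each $e^{-sA}$, hence with $T(t)$ — so by the standard core criterion for semigroup generators $\dom(A)$ is a core for $-\psi(A)$, which completes the core assertion in (i).

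The main obstacles are the two interchange-of-limits steps: in (i), justifying the termwise application of $\Phi$ under the $\gamma$-integral (which is what forces the total-variation estimate $2(1-e^{-s})=O(\min\{s,1\})$) and the commutation of the unbounded closed operator $1+A$ with the Bochner integral; and in (ii), the Fubini interchange together with the passage from vague convergence of $(\mu_t)$ to genuine strong continuity of $(T(t))$. The identification $[\,1/(\lambda+\psi)\,](A)=(\lambda+\psi(A))^{-1}$ is the pleasant shortcut that avoids differentiating the subordinated semigroup directly, but it rests on $1/(\lambda+\psi)$ being a bounded completely monotone function, which should be recorded; this follows from $1/(\lambda+\psi(\tau))=\int_0^\infty e^{-\lambda t}\,\widehat{\mu_t}(\tau)\,dt$.
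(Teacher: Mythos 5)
Your proof is correct, but there is nothing in the paper to compare it against line by line: the paper states Theorem \ref{BochP} without proof, importing it from \cite[Corollary 2.6]{GHT} and \cite[Proposition 2.1 and Theorem 12.6]{SchilSonVon2010}. What you have written is essentially the standard argument from those sources, recast entirely in the machinery the paper itself sets up, and it hangs together. For (i), expanding $\psi(z)(1+z)^{-1}$ through the L\'evy--Khintchine representation \eqref{predGA}, pushing the bounded homomorphism $\Phi$ through the $\gamma$-integral (your total-variation computation $2(1-e^{-s})=O(\min\{s,1\})$ is exactly the right justification for that interchange), and then stripping the regularizer with Hille's theorem for the closed operator $1+A$ recovers \eqref{clos} from the definition \eqref{part}; this is the route of \cite{GHT}. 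For (ii), your resolvent identification $\big[1/(\lambda+\psi)\big](A)=(\lambda+\psi(A))^{-1}$ via Proposition \ref{fcrules} is a clean way to pin down the generator without differentiating $t\mapsto T(t)x$, and it is legitimate: $1/(\lambda+\psi)\in\mathcal{BCM}$ gives a bounded $g(A)$, the product rule with $f=\lambda+\psi$ yields $f(A)g(A)=(fg)(A)=I$ and $g(A)f(A)\subset I$, and the sum rule (with the bounded summand $\lambda$) gives $(\lambda+\psi)(A)=\lambda+\psi(A)$; since two generators with a common resolvent coincide, $B=\psi(A)$. Two points you flag but should write out in a final version, though neither is a gap: first, the passage from vague convergence $\mu_t\to\delta_0$ together with $\mu_t([0,\infty))=e^{-ta}\le 1$ to $\mu_t([\delta,\infty))\to 0$ (choose $f\in\Cc$ with $\mathbf{1}_{[0,\delta/2]}\le f\le\mathbf{1}_{[0,\delta]}$ and use $\mu_t([\delta,\infty))\le e^{-ta}-\int f\,d\mu_t$), which is also what produces your $(e^{-ta}-1)x$ term; second, the invariance $T(t)\dom(A)\subset\dom(A)$ needs Hille's theorem once more ($A$ closed, $\int\|Ae^{-sA}x\|\,\mu_t(ds)\le M\|Ax\|<\infty$) before the core criterion applies.
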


Thus, the operator Bernstein function $\psi(A)$ can be recovered
from its restriction to $\dom (A)$ by means of \eqref{clos}.
Moreover, $-\psi(A)$ generates a bounded $C_0$-semigroup if $-A$
does, and this fact motivates further study of the permanence
properties for the mapping $-A \to -\psi(A),$ e.g. preservation of
the class of generators of holomorphic semigroups on $X$.

It will be crucial to note that subordination does not increase
the norm. Indeed, as an immediate consequence of Theorem
\ref{BochP}, (ii), one obtains
\begin{equation}\label{SupG}
 \sup_{t\ge 0}\,\|e^{-t\psi(A)}\|\le \sup_{\ge >0}\,\|e^{-t A}\|.
\end{equation}

While the relations \eqref{clos} and \eqref{entt}  hold for any
bounded $C_0$-semigroup, in this note we will concentrate on
bounded $C_0$-semigroups which are, in addition, holomorphic.
Recall that a  $C_0$-semigroup $(e^{-tA})_{t \ge 0}$ is said to be
holomorphic   if it extends holomorphically  to a sector
$\Sigma_\beta$ for some $\beta \in (0,\frac{\pi}{2}]$ which is
bounded on $\Sigma_{\theta}\cap\{z\in \mathbb C: |z|\le 1\}$ for
any $\theta\in (0,\beta)$. If $e^{-\cdot A}$ is bounded in
$\Sigma_\theta$ whenever $0<\theta<\beta,$ then $(e^{-tA})_{t \ge
0}$ is said to be a sectorially bounded holomorphic semigroup of
angle $\beta.$

It is well-known that sectorially bounded holomorphic semigroups
can be described by means of their asymptotics on the real axis.
Namely,  $-A$ is the generator of a sectorially bounded
holomorphic $C_0$-semigroup $(e^{-tA})_{t \ge 0}$ on a Banach
space $X$ if and only if $e^{-tA}(X) \subset \dom(A)$ for every $t
>0,$ and $\sup_{t\ge 0}\,\|e^{-tA}\|$ and
$\sup_{t>0}\,\|tAe^{-tA}\|$ are finite, see e.g. \cite[Theorem
4.6]{EngNag}.

It is often useful to omit the assumption of sectorial boundedness
and to consider  $C_0$-semigroups bounded on $\mathbb R_+$ and
having a holomorphic extension to a sector around the real axis.
This situation can also be characterized in terms of behavior of
$(e^{-tA})_{t \ge 0}$ on the positive half-axis.

By a classical Yosida's theorem, a $C_0$-semigroup
$(e^{-tA})_{t\ge 0}$ on $X$ is holomorphic if and only if
\begin{equation}\label{Y}
e^{-tA}(X)\subset \dom(A), \quad t>0, \, \, \text{and} \,\,
\limsup_{t\to 0}\,\|tAe^{-tA}\|<\infty.
\end{equation}
Since it is no easy to find this statement  in the literature, we
sketch its proof below. Note that by \cite[Proposition 3.7.2 $
b)$]{ABHN01} a $C_0$-semigroup $(e^{-tA})_{t\ge 0}$ on $X$ is
holomorphic if and only if there exists $a>0$ such that
$(e^{-t(A+a)})_{t\ge 0}$ is a sectorially bounded holomorphic
$C_0$-semigroup. Then, by \cite[Theorem 4.6]{EngNag} mentioned
above, the latter property is equivalent to $e^{-tA}(X) \subset
\dom(A)$ for every $t
>0,$ and
\begin{equation}\label{bound}
\sup_{t>0}\, (e^{-at}\|e^{-tA}\|+\|te^{-at}Ae^{-tA}\|)<\infty.
\end{equation}
Thus, in particular, \eqref{Y} holds. Conversely, if \eqref{Y} is
true, then \eqref{bound} is satisfied for certain $a>0,$ and the
sectorial boundedness of $(e^{-t(A+a)})_{t\ge 0}$ yields the
holomorphicity of $(e^{-tA})_{t\ge 0}.$ (Concerning Yosida's
theorem and its proof see also \cite{Yosida} and \cite[Remark, p.
332]{Kom}.)

Note that if $(e^{-tA})_{t\ge 0}$ is holomorphic and bounded, then
for all $\delta >0$ and $t > \delta,$
\[
\|Ae^{-tA}\|\le  \left(\sup_{t \ge 0 }\|e^{-tA}\|\right)
\sup_{t\in (\delta/2,\delta)} \|Ae^{-t A}\|.
\]
In other words,  if $(e^{-tA})_{t\ge 0}$ is bounded, then the Yosida condition (\ref{Y}) can be given the
equivalent form
\begin{equation}\label{MMGS11}
\|Ae^{-tA}\|\le c_0+\frac{c_1}{t},\qquad t>0,
\end{equation}
with some constants $c_0\ge 0$ and $c_1>0$ which will be crucial
in the estimates below. Thus, if $(e^{-tA})_{t \ge 0}$ satisfies
\eqref{MMGS11}, then we say that $(e^{-tA})_{t \ge 0}$ satisfies
\emph{the Yosida condition $Y(c_0,c_1)$ }(which is just an
explicit form of the classical Yosida condition \eqref{Y} above).

It will be convenient to rewrite \eqref{MMGS11} in terms of only
$(e^{-tA})_{t \ge 0}.$ To this aim, we first prove the following
simple proposition.

\begin{prop}\label{estS}
Let $-A$ be the generator of a bounded  $C_0$-semigroup on a
Banach space $X$ such that
\begin{equation}\label{MMGS}
\sup_{t\ge 0}\,\|e^{-tA}\|\le M.
\end{equation}
Suppose that
$e^{-tA}(X) \subset \dom(A)$, $t>0$, and there exists
an increasing function $r :\,(0,\infty)\mapsto
(0,\infty)$ such that
\begin{equation}\label{MMGS1}
\sup_{t>0}\,r(t)\|Ae^{-tA}\|\le 1.
\end{equation}
Then
\begin{equation}\label{maxG}
\|(1-e^{-s A})\,e^{-t A}\|\le
\frac{4M\, s}{2Mr(t)+s},\qquad s, t>0.
\end{equation}
\end{prop}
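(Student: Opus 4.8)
The plan is to reduce the estimate to the fundamental theorem of calculus for the semigroup, combined with an elementary two-bound interpolation. First I would fix $t>0$ and a vector $x\in X$. Since $e^{-tA}x\in\dom(A)$ by hypothesis, the orbit $u\mapsto e^{-(u+t)A}x$ is continuously differentiable on $[0,\infty)$ with
\[
\frac{d}{du}\,e^{-(u+t)A}x=-Ae^{-(u+t)A}x=-e^{-uA}\bigl(Ae^{-tA}x\bigr),
\]
the right-hand side being continuous in $u$ by strong continuity of $(e^{-uA})_{u\ge 0}$ applied to the fixed vector $Ae^{-tA}x$. Integrating from $0$ to $s$ then yields the Bochner-integral identity
\[
(1-e^{-sA})e^{-tA}x=\int_0^s Ae^{-(u+t)A}x\,du .
\]

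From here I would extract the two competing bounds. Taking norms under the integral, using the hypothesis $\|Ae^{-vA}\|\le 1/r(v)$ and the monotonicity of $r$ (so that $r(u+t)\ge r(t)$ for $u\ge 0$), I get
\[
\|(1-e^{-sA})e^{-tA}x\|\le\int_0^s\frac{\|x\|}{r(u+t)}\,du\le\frac{s}{r(t)}\,\|x\| ,
\]
whence, passing to the operator norm, $\|(1-e^{-sA})e^{-tA}\|\le s/r(t)$. On the other hand, the crude triangle-inequality bound uses only \eqref{MMGS}:
\[
\|(1-e^{-sA})e^{-tA}\|\le\|e^{-tA}\|+\|e^{-(s+t)A}\|\le 2M .
\]
Combining the two gives $\|(1-e^{-sA})e^{-tA}\|\le\min\bigl\{s/r(t),\,2M\bigr\}$.

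It then remains to convert this minimum into the stated rational expression. I would invoke the elementary inequality $\min\{a,b\}\le \tfrac{2ab}{a+b}$, valid for all $a,b>0$ (if $a\le b$ then $a+b\le 2b$, so $\tfrac{2ab}{a+b}\ge a$). Applying it with $a=s/r(t)$ and $b=2M$ produces exactly
\[
\min\Bigl\{\tfrac{s}{r(t)},\,2M\Bigr\}\le\frac{2\cdot(s/r(t))\cdot 2M}{s/r(t)+2M}=\frac{4Ms}{2Mr(t)+s},
\]
which is \eqref{maxG}.

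I do not expect a serious obstacle here; the only point requiring care is the justification of the integral representation, and working with a fixed vector $x$ rather than directly with operators sidesteps any need for operator-norm continuity of $u\mapsto Ae^{-(u+t)A}$. Indeed, the integrand is continuous in $X$ precisely because $Ae^{-tA}x$ is a fixed vector and the semigroup is strongly continuous, so the Bochner integral is legitimate and the differentiation identity holds exactly. The monotonicity assumption on $r$ enters solely through the lower bound $r(u+t)\ge r(t)$, which is what allows the first estimate to close.
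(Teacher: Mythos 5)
Your proof is correct and follows essentially the same route as the paper: the integral identity $(1-e^{-sA})e^{-tA}=\int_t^{t+s}Ae^{-\tau A}\,d\tau$ combined with monotonicity of $r$ gives the bound $s/r(t)$, the semigroup bound gives $2M$, and the inequality $\min\{a,b\}\le 2ab/(a+b)$ converts the minimum into the stated expression. The only difference is cosmetic: you justify the integral representation carefully by working with a fixed vector $x$ and the hypothesis $e^{-tA}x\in\dom(A)$, whereas the paper states the identity without comment.
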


\begin{proof}
By (\ref{MMGS}), for all $s, t>0,$
\[
\|(1-e^{-s A})\,e^{-t A}\|\le 2M.
\]
On the other hand, since
\begin{equation}\label{integral}
(1-e^{-s A})\,e^{- tA}= \int_t^{t+s} Ae^{-\tau A} \, d\tau,
\end{equation}
we infer  by (\ref{MMGS1}) that
\begin{eqnarray*}
\|(1-e^{-s A})\,e^{- tA}\|&\le&
 \le
 \int_t^{t+s} \frac{d\tau}{r(\tau)}
 \le
\frac{s}{r(t)},\qquad s,t>0.
\end{eqnarray*}
Then, since
\[
\min\,\{a, b\}\le \frac{2ab}{a+b},\qquad a,b>0,
\]
it follows that
\[
\|(1-e^{-s A})\,e^{-t A}\|\le
\min\,\{2M,s/r(t)\}\le \frac{4M\, s}{2Mr(t)+s}.
\]
\end{proof}

Now we are ready to recast \eqref{MMGS11} in semigroup terms, and
the following corollary of Proposition \ref{estS} is almost
immediate.
\begin{cor}\label{CorR}
Let $-A$ be the generator of a $C_0$-semigroup on  $X$
satisfying (\ref{MMGS}) and the Yosida condition $Y(c_0, c_1).$
Then
\begin{equation}\label{maxG1}
\|(1-e^{-s A})\,e^{-t A}\|\le 2s\left\{\frac{2Mc_0}{1+c_0s}+
\frac{\max(2M,c_1)}{t+s}\right\},\qquad s, t>0.
\end{equation}
Conversely, if estimate \eqref{maxG1} holds, then $(e^{-tA})_{t\ge 0}$ satisfies
the Yosida condition $Y(4M c_0, 2\max(2M,c_1)).$
\end{cor}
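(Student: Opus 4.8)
For the \emph{forward implication}, the plan is to reuse the two crude bounds already behind Proposition \ref{estS}. Since $(e^{-tA})_{t\ge 0}$ satisfies $Y(c_0,c_1)$, condition \eqref{MMGS11} holds and $e^{-tA}(X)\subset\dom(A)$, so the integral representation \eqref{integral} is available. On one hand \eqref{MMGS} gives $\|(1-e^{-sA})e^{-tA}\|\le 2M$; on the other hand, integrating \eqref{MMGS11} over $[t,t+s]$ and using $\tau^{-1}\le t^{-1}$ there yields $\|(1-e^{-sA})e^{-tA}\|\le c_0 s + c_1 s/t$. Hence
\[
\|(1-e^{-sA})e^{-tA}\|\le \min\{2M,\,c_0 s + c_1 s/t\}.
\]
I would then apply the elementary subadditivity $\min\{a,b+c\}\le\min\{a,b\}+\min\{a,c\}$ to split the right-hand side as $\min\{2M,c_0 s\}+\min\{2M,c_1 s/t\}$, and finally the harmonic–mean inequality $\min\{a,b\}\le 2ab/(a+b)$ (used already in the proof of Proposition \ref{estS}) to each summand, producing $\frac{4Mc_0 s}{2M+c_0 s}+\frac{4Mc_1 s}{2Mt+c_1 s}$. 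Bounding $2M\ge 1$ (note $M\ge\|e^{0\cdot A}\|=1$) in the first denominator, and using $\max(2M,c_1)(2Mt+c_1 s)\ge 2Mc_1(t+s)$ in the second, gives exactly \eqref{maxG1}.

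For the \emph{converse}, assume \eqref{maxG1} and fix $t>0$. Set $B_s:=s^{-1}(1-e^{-sA})e^{-tA}$; the hypothesis gives $\|B_s\|\le 2\{2Mc_0/(1+c_0 s)+\max(2M,c_1)/(t+s)\}$, so $\{B_s\}_{0<s\le 1}$ is uniformly bounded and $\limsup_{s\to 0+}\|B_s\|\le 4Mc_0+2\max(2M,c_1)/t$. The plan is to promote this norm bound to strong convergence and identify the limit. For $x\in\dom(A)$ one has $e^{-tA}x\in\dom(A)$ and $B_s x\to Ae^{-tA}x$, so $B_s$ converges on the dense subspace $\dom(A)$; together with the uniform bound this forces $B_s$ to converge strongly on all of $X$ to a bounded operator $C$ with $\|C\|\le\limsup_{s\to 0+}\|B_s\|$. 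Since $B_s x=s^{-1}(e^{-tA}x-e^{-(t+s)A}x)$, strong convergence means the right derivative of $s\mapsto e^{-(t+s)A}x$ at $s=0$ exists for every $x$, and by the defining characterization of the generator this yields $e^{-tA}x\in\dom(A)$ with $Ae^{-tA}x=Cx$. Thus $e^{-tA}(X)\subset\dom(A)$, $Ae^{-tA}=C$, and $\|Ae^{-tA}\|\le 4Mc_0+2\max(2M,c_1)/t$, which is precisely $Y(4Mc_0,\,2\max(2M,c_1))$.

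I expect the \emph{main obstacle} to lie entirely in the converse. The forward direction is algebraic once \eqref{integral} and the two bounds are in place. In the converse, however, \eqref{maxG1} controls only operator norms, whereas the Yosida condition demands the genuine domain inclusion $e^{-tA}(X)\subset\dom(A)$ and boundedness of $Ae^{-tA}$; a uniform bound on $B_s$ does not by itself produce a limit operator. The delicate step is therefore passing from uniform boundedness of $\{B_s\}$ to the existence of $Ae^{-tA}$, which I resolve by combining the uniform bound with convergence on the dense set $\dom(A)$ and invoking the right-derivative characterization of the generator; this argument simultaneously delivers the domain inclusion and the norm estimate with the asserted constants.
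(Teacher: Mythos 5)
Your proof is correct and follows essentially the same route as the paper: your forward direction reproduces (via min-subadditivity plus the harmonic-mean inequality) exactly the intermediate bound $\frac{4Mc_0s}{2M+c_0s}+\frac{4Mc_1s}{2Mt+c_1s}$ that the paper obtains by applying Proposition \ref{estS} with $r(t)=t/(c_0t+c_1)$, and your converse is the same divide-by-$s$, let $s\to0+$ argument. If anything, your converse is more careful than the paper's one-line version, since you supply the uniform-boundedness-plus-density argument establishing the domain inclusion $e^{-tA}(X)\subset\dom(A)$ and the existence of $Ae^{-tA}$ as a bounded operator, which the paper leaves implicit.
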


\begin{proof}
By  Proposition \ref{estS} applied to
\[
r(t):=\frac{t}{c_0t+c_1},\qquad t>0,
\]
we obtain that
\begin{eqnarray*}
\|(1-e^{-s A})\,e^{-t A}\|&\le&
4Ms\frac{(c_0t+1)}{2Mt+(c_0t+c_1)s}\\
&=&4Ms\left\{\frac{c_0t}{2Mt+(c_0t+c_1)s}+
\frac{c_1}{2Mt+(c_0t+c_1)s}\right\}\\
&\le& 4Ms\left\{\frac{c_0}{2M+c_0s}+ \frac{c_1}{2Mt+c_1s}\right\}\\
&\le& 2s\left\{\frac{2Mc_0}{1+c_0s}+ \frac{\max(2M,c_1)}{t+s}\right\}.
\end{eqnarray*}
If, conversely, \eqref{maxG1} is true, then dividing both sides of it by
$s$, using \eqref{integral} and passing to the limit as $s \to 0+$ for a fixed $t >0,$
we get
\begin{equation*}
\|Ae^{-tA}\| \le 4Mc_0+ 2\frac{\max(2M,c_1)}{t},
\end{equation*}
that is  $Y(4M c_0, 2\max(2M,c_1))$ holds.
\end{proof}

The elementary estimate \eqref{maxG1} will play a key role in the subsequent arguments.

\section{Main results}

To obtain a positive answer to Kishimoto-Robinson's question, we
need to show that if   $(e^{-tA})_{t\ge 0}$ is a bounded $C_0$-semigroup
satisfying Yosida's condition, then for any Bernstein function $\psi$ one has
$e^{-t\psi(A)}(X) \subset {\rm dom}\, (\psi(A)), t >0,$ and the
function $t \mapsto  \|t \psi(A)e^{-t\psi(A)}\|$ is bounded
in an appropriate neighborhood of zero. This will be derived as a simple
consequence of the following operator norm estimate for
 $\psi(A)g(A)$ where  $\psi\in\mathcal{BF}$ and  $g \in \mathcal{BCM}.$
In a different context, a related estimate was obtained in \cite[Theorem 1]{Mir4}.

For the rest of the paper, if $(e^{-tA})_{t \ge 0}$ is a bounded $C_0$-semigroup on a Banach space $X$
then we let
$$
M(A):=\sup_{t \ge 0}\|e^{-tA}\|.
$$

\begin{thm}\label{Laub1}
Let $\psi\in\mathcal{BF}$ and  $g \in \mathcal{BCM}$ be such
that $J[g, \psi]<\infty$. Let $-A$ be
the generator of a bounded $C_0$-semigroup satisfying
the Yosida condition $Y(c_0, c_1).$
Then $\psi(A)g(A) \in \mathcal L(X)$ and
\begin{eqnarray}\label{App1}
\|\psi(A)g(A)\|&\le& \psi(0)\|g(A)\|\\
&+& 2\max(M(A),c_1) J[g, \psi]+4M(A)g(0+)C[c_0;\psi],\notag
\end{eqnarray}
where
\begin{equation}\label{Const1}
C[c_0;\psi]:= \int_0^\infty e^{-s/c_0}\psi'(s)\,ds, \qquad c_0>0,\qquad C[0;\psi]:=0.
\end{equation}
\end{thm}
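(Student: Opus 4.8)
The plan is to reduce to the case $\psi(0)=0$ and then to evaluate $\psi(A)g(A)$ by combining the operator L\'evy--Khintchine formula \eqref{clos} with the representation of $g(A)$ as a Bochner integral of the semigroup, estimating everything through the Yosida bound \eqref{maxG1}.

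First I would split off the constant part of $\psi$. Writing $\psi\sim(a,b,\gamma)$ with $a=\psi(0)$, the function $\psi_0:=\psi-\psi(0)$ is again Bernstein, with $\psi_0(0)=0$, the same pair $(b,\gamma)$, and $\psi_0'=\psi'$; hence $J[g,\psi_0]=J[g,\psi]$ and $C[c_0;\psi_0]=C[c_0;\psi]$. Since $\psi(A)=\psi(0)I+\psi_0(A)$ with the same domain, we get $\psi(A)g(A)=\psi(0)g(A)+\psi_0(A)g(A)$, which produces the first term $\psi(0)\|g(A)\|$ and reduces the problem to bounding $\|\psi_0(A)g(A)\|$ by the remaining two terms. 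So from now on assume $\psi(0)=0$. By Bernstein's theorem \eqref{bernst} write $g=\widehat\nu$, so that $g(A)=\int_0^\infty e^{-uA}\,\nu(du)$ and $\nu([0,\infty))=g(0+)$.

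The heart of the argument is to justify that $g(A)X\subset\dom(\psi_0(A))$ and that \eqref{clos} may be applied under the integral sign, giving
\begin{equation*}
\psi_0(A)g(A)=bA g(A)+\int_{0+}^\infty (1-e^{-sA})g(A)\,\gamma(ds).
\end{equation*}
I expect this to be the main obstacle. To handle it cleanly I would regularise: for $\varepsilon>0$ the function $g_\varepsilon:=e^{-\varepsilon\,\cdot}g$ lies in $\mathcal{BCM}$ and its representing measure is supported in $[\varepsilon,\infty)$, so $g_\varepsilon(A)x=\int_\varepsilon^\infty e^{-uA}x\,\nu_\varepsilon(du)\in\dom(A)$ because $\|Ae^{-uA}\|$ is bounded for $u\ge\varepsilon$ under the Yosida condition. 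Then \eqref{clos} applies verbatim to $g_\varepsilon(A)x$, and since $J[g_\varepsilon,\psi]\le J[g,\psi]$ and $g_\varepsilon(0+)=g(0+)$, the norm estimate obtained below is uniform in $\varepsilon$. Letting $\varepsilon\to0$, using $g_\varepsilon(A)\to g(A)$ strongly together with the closedness of $\psi_0(A)$, one concludes $g(A)X\subset\dom(\psi_0(A))$ and transfers the bound to $\psi_0(A)g(A)$.

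It then remains to estimate the two pieces. For the first, the Yosida condition $Y(c_0,c_1)$ gives $\|bA g(A)\|\le b\int_0^\infty(c_0+c_1/u)\,\nu(du)=bc_0g(0+)+bc_1\int_0^\infty g(r)\,dr$. For the second I would insert $g(A)=\int_0^\infty e^{-uA}\,\nu(du)$ and apply the elementary semigroup estimate \eqref{maxG1} of Corollary~\ref{CorR} to $\|(1-e^{-sA})e^{-uA}\|$. The two scalar double integrals over $\gamma(ds)\,\nu(du)$ are then evaluated by Fubini through the identities
\begin{equation*}
\int_0^\infty\frac{\nu(du)}{u+s}=\int_0^\infty e^{-sr}g(r)\,dr,\qquad \psi'(r)=b+\int_{0+}^\infty s\,e^{-sr}\,\gamma(ds),
\end{equation*}
which turn $\int_{0+}^\infty s\big(\int_0^\infty e^{-sr}g(r)\,dr\big)\gamma(ds)$ into $J[g,\psi]-b\int_0^\infty g$, and, together with $C[c_0;\psi]=bc_0+\int_{0+}^\infty\frac{c_0s}{1+c_0s}\gamma(ds)$ coming from \eqref{Const1}, turn the $c_0$-part into a multiple of $g(0+)C[c_0;\psi]$. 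Collecting the contributions, the $b\int_0^\infty g$ terms cancel against the $c_1$-term favourably and the $bc_0g(0+)$ term is absorbed into $4M(A)g(0+)C[c_0;\psi]$ using $M(A)\ge1$; tracking the constants then yields \eqref{App1}. Besides the domain justification above, the only delicate point is this constant bookkeeping, in particular squeezing the coefficient of $J[g,\psi]$ down to $2\max(M(A),c_1)$, which requires using \eqref{maxG1} in its sharp form rather than a cruder majorant.
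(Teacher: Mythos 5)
Your reduction to $\psi(0)=0$, the use of the operator L\'evy--Khintchine formula \eqref{clos}, the elementary estimate \eqref{maxG1}, the two Fubini identities producing $J[g,\psi]$ and $C[c_0;\psi]$, and the final bookkeeping (the favourable cancellation of the $b\int_0^\infty g$ terms, the absorption of $bc_0g(0+)$ using $M(A)\ge 1$) all coincide with the paper's own proof and are carried out correctly. The genuine gap is in your limiting step. From the uniform bound $\|\psi_0(A)g_\varepsilon(A)x\|\le C\|x\|$ together with the strong convergence $g_\varepsilon(A)x\to g(A)x$ you cannot conclude $g(A)x\in\dom(\psi_0(A))$ by ``closedness of $\psi_0(A)$'': closedness of an operator $T$ lets you pass to the limit only when \emph{both} $y_\varepsilon\to y$ \emph{and} $Ty_\varepsilon$ converges; mere boundedness of $Ty_\varepsilon$ is not enough. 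Since $X$ is an arbitrary Banach space, hence possibly non-reflexive, you cannot even extract a weakly convergent subsequence of $\psi_0(A)g_\varepsilon(A)x$, so the inference fails as written.

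The repair is available from material you already have, in two ways. Either (a) show that each term of \eqref{clos} applied to $g_\varepsilon(A)x$ actually converges as $\varepsilon\to 0$: the integrands $Ae^{-(u+\varepsilon)A}x=e^{-\varepsilon A}Ae^{-uA}x$ and $(1-e^{-sA})g_\varepsilon(A)x$ converge pointwise (strong continuity) and are dominated, uniformly in $\varepsilon$, by exactly the $\nu$- and $\gamma$-integrable majorants you used for the uniform bound; then closedness of $\psi_0(A)$ applies legitimately. Or (b), which is the paper's shorter route: skip the regularization altogether, note that for $x\in\dom(A)$ one has $g(A)x\in\dom(A)\subset\dom(\psi(A))$ (because $g(A)$ commutes with the resolvent of $A$, and $\dom(A)\subset\dom(\psi(A))$ by Theorem \ref{BochP}(i)), so the L\'evy--Khintchine formula applies to $g(A)x$ directly; having the bound on the dense subspace $\dom(A)$, conclude for all $x\in X$ by closedness of the \emph{product} $\psi(A)g(A)$ (a closed operator following a bounded one), which is where a closedness argument genuinely works. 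Finally, your closing worry about squeezing the coefficient of $J[g,\psi]$ down to $2\max(M(A),c_1)$ is well founded, but it points to a defect in the paper rather than in your argument: the paper's own proof (see \eqref{domGA1}) and its subsequent corollaries only give $2\max(2M(A),c_1)$, which is precisely what your computation yields; the smaller constant in the statement \eqref{App1} appears to be a misprint.
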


\begin{proof}
By assumption and Bernstein's theorem, there exists a finite
Radon measure $\nu$ on $[0,\infty)$ such that
\begin{equation}\label{sim}
g(s)=\int_0^\infty e^{- \tau s}\,\nu(d\tau),\quad s > 0,\qquad
g(0+)=\nu([0,\infty))<\infty.
\end{equation}
Let $\varphi\sim (a,b,\gamma)$ so that the representation
(\ref{predGA}) holds.
Then \eqref{Const1} takes the form
\begin{equation*}
C[c_0;\psi]=bc_0+\int_{0+}^\infty
\frac{c_0 s}{1+c_0 s}\,\gamma(ds).
\end{equation*}

Note that it suffices to prove (\ref{App1}) for a Bernstein
function $\psi$ with $a=\psi(0)=0$.

Suppose first that  $a=b=0$ in (\ref{predGA}). Let $x\in
\dom(A)$ be fixed. Then, by
\eqref{fcinclusion} and \eqref{fcrules},
\[
g(A)x\in \dom(A)\subset \dom \psi(A).
\]
Hence, by Fubini's theorem, we have
\begin{eqnarray*}
\psi(A)g(A)x=g(A)\psi(A)x&=&
\int_0^\infty e^{-\tau A}\,\nu(d\tau)\int_{0+}^\infty [1-e^{-s A}]x\,\gamma(ds)\\
&=&\int_0^\infty  \int_{0+}^\infty [1-e^{-s A}]e^{-\tau A}x\,
\gamma(ds)\,\nu(d\tau).
\end{eqnarray*}
Using  (\ref{maxG1}) and (\ref{sim}), from here it follows that
\begin{eqnarray}\label{estim}
&&\|\psi(A)g(A)x\|\\
&\le& 2\|x\|\int_0^\infty \int_{0+}^\infty
\left\{\frac{2M(A)c_0 s}{1+c_0s}+\frac{\max(2M,c_1)s}{\tau+s}\right\}\,\gamma(ds)\,\nu(d\tau)
\notag \\
&=&
2\|x\|\left\{g(0)C[c_0;\psi]
+\max(2M(A),c_1) \int_0^\infty\int_{0+}^\infty
\frac{s}{\tau+s}\,\gamma(ds)\,\nu(d\tau)\right\}.\notag
\end{eqnarray}
Again, by applying Fubini's theorem twice, we obtain that (as in
\eqref{q5})
\begin{equation*}\label{GDer}
\int_0^\infty e^{-\tau t} \psi'(t)\,dt= \int_{0+}^\infty
\frac{s\,\gamma(ds)}{s+\tau},\quad \tau>0.
\end{equation*}
and
\begin{eqnarray}\label{JJP}
\int_0^\infty \int_{0+}^\infty
\frac{s\,\gamma(ds)}{s+\tau}\,\nu(d\tau)&=& \int_0^\infty
\int_0^\infty e^{-\tau t}\psi'(t)\,dt\,\nu(d\tau)
\\
&=&\int_0^\infty \int_0^\infty e^{-\tau t} \,\nu(d\tau)\,\psi'(t)\,dt\notag \\
&=&\int_0^\infty g(t)\psi'(t)\,dt\notag \\
&=&J[g, \psi].\notag
\end{eqnarray}
So, \eqref{estim} yields
\begin{equation}\label{domGA1}
\|\psi(A)g(A)x\|\le
2\|x\|\{\max(2M(A),c_1)J[g, \psi]+2M(A)g(0)C[c_0;\psi]\}.
\end{equation}

From (\ref{domGA1}), since $\psi(A)g(A)$ is closed as product
of closed and bounded operators and
$\dom(A)$ is dense in $X$, we conclude  that
\begin{equation}\label{Inc}
\ran(g(A))\subset \dom(\psi(A)),
\end{equation}
and \eqref{App1} holds.
This finishes the proof in the case $a=b=0.$

Let now $a=0$ and  $b>0$. Arguing as above, if $x\in \dom(A)$ is fixed, then
\begin{eqnarray*}
\psi(A)g(A)x&=&g(A)\psi(A)x
\\
&=& b\int_0^\infty Ae^{-\tau A}x\,\nu(d\tau)
+\int_0^\infty  \int_{0+}^\infty [1-e^{-s A}]\,
e^{-\tau A}x\,\gamma(ds)\,\nu(d\tau).
\end{eqnarray*}
Note that $\psi'(s)\ge b$, $s>0$, and
\begin{eqnarray*}
\int_0^\infty \tau^{-1}\nu(d\tau)&=&
\int_0^\infty g(s)\,ds\\
&\le& b^{-1}\int_0^\infty g(s)\psi'(s)\,ds
=b^{-1}J[g, \psi]<\infty.
\end{eqnarray*}
Therefore,
\begin{equation}\label{aaa}
\|A g(A)x\|\le \int_{0}^{\infty}\|Ae^{-\tau A}x\|\, \nu(d\tau)
\le \|x\|\int_{0}^{\infty}(c_0+c_1/\tau)\,\nu(d\tau).
\end{equation}

Now using (\ref{JJP}) for a  Bernstein function $\psi(t)-bt,$
and taking into account \eqref{aaa}, we obtain that
\begin{eqnarray*}
\|\psi(A)g(A)x\|&\le& b\|x\| \int_0^\infty
(c_0+c_1\tau^{-1})\,\nu(d\tau)
\\
&+&2\|x\|\int_0^\infty \int_{0+}^\infty
\left\{\frac{2 M(A)c_0 s}{1+c_0s}+\frac{\max (2M(A),c_1) s}{\tau+s}\right\}\,\gamma(ds)\,\nu(d\tau)\\
&\le& g(0+)bc_0\|x\| + \|x\|b \int_0^\infty g(s)\,ds
\\
&+&4M(A) g(0+)\|x\| \int_{0+}^\infty \frac{c_0 s}{1+c_0s}\,\gamma(ds)\\
&+&2\max(M(A),c_1)\|x\|\int_0^\infty g(s)[\psi'(s)-b]\,ds
\\
&\le& 4M(A)g(0+)\|x\|\left\{ b c_0 + \int_{0+}^\infty \frac{c_0
s}{1+c_0s}\,\gamma(ds)\right\}\\
&+&2\max(2M(A),c_1)\|x\|\int_0^\infty
g(s)\psi'(s)\,ds\\
\\
&=&2\|x\|\,\{\max(2M(A),c_1) J[g,\psi]+2M(A)g(0+)C[c_0;\psi]\}.
\end{eqnarray*}
Since the operator $\psi(A)g(A)$ is closed and  $\dom(A)$ is
dense,  the last inequality implies  (\ref{Inc}) and (\ref{App1}).
\end{proof}

\begin{rem}
The assumption $J[g,\psi]<\infty$ is not necessary to ensure the
boundedness of $\psi(A)g(A).$ To see this, it is enough to
consider a Bersntein function $\psi(\tau)=\tau+1$ and a bounded
completely monotone function $g(\tau)=1/(\tau+1).$ However, the
assumption implies the boundedness of $\psi\cdot g$ in any sector
$\Sigma_\beta$ with $\beta \in (0,\pi/2),$ see Corollary
\ref{CorLaub10}. If $-A$ generates a sectorially bounded
holomorphic $C_0$-semigroup and admits, in addition, a bounded
$H^{\infty}$-calculus on a sector $\Sigma_\theta$, the boundedness
of $\psi\cdot g$ in $\Sigma_\beta,$ $\beta>\theta,$ implies also
the boundedness of $\psi(A)g(A).$
\end{rem}

For a  choice of $g$ as $e^{-t\varphi},$ where $\varphi$ is a
Bernstein function,  Theorem \ref{Laub1} yields immediately the
following corollaries.

\begin{cor}\label{Laub2}
Let $\psi$ and $\varphi$ be  Bernstein  functions such that
$J[e^{-t\varphi},\psi]<\infty$ for every $t>0$. Let $-A$ be the
generator of a bounded $C_0$-semigroup on $X$ satisfying the Yosida condition $Y(c_0, c_1).$
Then for every $t >0,$
\begin{eqnarray*}
\|\psi(A)e^{-t\varphi(A)}\| &\le& \psi(0)\|e^{-t\varphi(A)}\|\label{ABC1G02}\\
&+& 2 \max(2M(A),c_1)
J[e^{-t\varphi},\psi]+4M(A)e^{-t\varphi(0)}C[c_0,\psi].
\end{eqnarray*}
\end{cor}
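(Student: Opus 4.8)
The plan is to apply Theorem~\ref{Laub1} directly, taking $g := e^{-t\varphi}$ for an arbitrary but fixed $t > 0$. First I would check that this choice of $g$ satisfies the hypotheses of Theorem~\ref{Laub1}. Since $\varphi \in \mathcal{BF}$, Theorem~\ref{Bochner} provides a subprobability measure $\mu_t$ on $[0,\infty)$ with $\widehat{\mu_t} = e^{-t\varphi}$; in particular $g = e^{-t\varphi} \in \mathcal{BCM}$, and $g(0+) = e^{-t\varphi(0+)} = e^{-t\varphi(0)}$. The remaining hypothesis $J[g,\psi] = J[e^{-t\varphi},\psi] < \infty$ is exactly what we assume.

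The second step is to identify $g(A)$ with $e^{-t\varphi(A)}$. Because $g = \widehat{\mu_t} \in \mathrm{A}^1_+(\mathbb C_+)$, the Hille--Phillips calculus gives $g(A) = \int_0^\infty e^{-sA}\,\mu_t(ds)$, which by formula~\eqref{entt} of Theorem~\ref{BochP}(ii) is precisely the subordinated semigroup operator $e^{-t\varphi(A)}$. Substituting $g(A) = e^{-t\varphi(A)}$, $g(0+) = e^{-t\varphi(0)}$, and $J[g,\psi] = J[e^{-t\varphi},\psi]$ into the estimate~\eqref{App1} of Theorem~\ref{Laub1} then yields the stated inequality, for every $t > 0$.

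Since the corollary is a direct specialization of Theorem~\ref{Laub1}, there is no genuine obstacle: the substance lies entirely in the theorem being invoked. The only point deserving care is the identification $(e^{-t\varphi})(A) = e^{-t\varphi(A)}$, i.e.\ that the function $e^{-t\varphi}$ evaluated in the extended Hille--Phillips calculus coincides with the subordinated semigroup operator at time $t$; this is the operator analogue of Bochner's subordination formula recorded in~\eqref{entt}, and it is what allows the abstract bound~\eqref{App1} to be read as a concrete estimate for $\|\psi(A)e^{-t\varphi(A)}\|$.
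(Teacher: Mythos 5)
Your proposal is correct and takes essentially the same approach as the paper: the paper obtains Corollary~\ref{Laub2} precisely by specializing Theorem~\ref{Laub1} to $g = e^{-t\varphi}$, noting (as in Example~\ref{ExT}) that $e^{-t\varphi}\in\mathcal{BCM}$ by Theorem~\ref{Bochner}, with the identification $(e^{-t\varphi})(A)=e^{-t\varphi(A)}$ coming from Theorem~\ref{BochP}(ii), i.e.\ formula~\eqref{entt}. Your write-up merely makes explicit the verifications that the paper dismisses as ``immediate,'' so there is nothing to add.
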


\begin{cor}\label{AunboundedG2}
Let $\psi$  be a Bernstein function  and let $-A$ be the
generator of a  bounded $C_0$-semigroup $(e^{-tA})_{t \ge 0}$ on
$X$ satisfying the Yosida condition $Y(c_0, c_1).$ Then for every $t >0,$
\begin{equation}\label{ABC1G4}
\| \psi(A)e^{-t A}\|\le
2\max(2M(A),c_1)\psi(1/t)+4M(A)C[c_0;\psi].
\end{equation}
In particular, if $-A$ generates a sectorially bounded holomorphic
$C_0$-semigroup, then
\begin{equation}\label{ABC1G}
\|\psi(A)e^{-t A}\|\le 2 \max(2M(A),c_1)\psi(1/t),\qquad t>0.
\end{equation}
\end{cor}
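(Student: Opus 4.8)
The plan is to derive both inequalities as a direct specialization of Corollary \ref{Laub2}, taking for the subordinating Bernstein function $\varphi$ the identity $\varphi_1(\tau)=\tau$, so that $e^{-t\varphi_1(A)}=e^{-tA}$. First I would verify the hypothesis $J[e^{-t\varphi_1},\psi]<\infty$ of Corollary \ref{Laub2}: this is precisely the computation carried out in Example \ref{ExT}\,b), equation \eqref{q5}, which in fact gives the sharp bound
\[
J[e^{-t\varphi_1},\psi]=\int_0^\infty e^{-ts}\psi'(s)\,ds\le \psi(1/t)-\psi(0),\qquad t>0.
\]
Since $\varphi_1(0)=0$ we have $e^{-t\varphi_1(0)}=1$, and $\|e^{-tA}\|\le M(A)$; substituting these into Corollary \ref{Laub2} yields
\[
\|\psi(A)e^{-tA}\|\le \psi(0)M(A)+2\max(2M(A),c_1)\bigl(\psi(1/t)-\psi(0)\bigr)+4M(A)C[c_0;\psi].
\]

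The next step is the bookkeeping of the two terms carrying the factor $\psi(0)$. Since $2\max(2M(A),c_1)\ge 4M(A)\ge M(A)$ and $\psi(0)=a\ge 0$, the quantity $\psi(0)M(A)-2\max(2M(A),c_1)\psi(0)$ is nonpositive; dropping it from the previous display leaves exactly
\[
\|\psi(A)e^{-tA}\|\le 2\max(2M(A),c_1)\psi(1/t)+4M(A)C[c_0;\psi],
\]
which is \eqref{ABC1G4}.

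For the refinement \eqref{ABC1G}, I would note that if $-A$ generates a sectorially bounded holomorphic $C_0$-semigroup then $\sup_{t>0}\|tAe^{-tA}\|<\infty$, so the Yosida condition \eqref{MMGS11} holds with $c_0=0$ (and $c_1=\sup_{t>0}\|tAe^{-tA}\|$). By the definition \eqref{Const1} one has $C[0;\psi]=0$, so the last term in \eqref{ABC1G4} vanishes and the stated estimate follows immediately.

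There is no real obstacle here, the corollary being a specialization of Theorem \ref{Laub1} through Corollary \ref{Laub2}. The only point demanding a little care is the handling of the $\psi(0)$ contributions: one must invoke the sharp form $J[e^{-t\varphi_1},\psi]\le\psi(1/t)-\psi(0)$ from \eqref{q5}, rather than the coarser estimate $J[e^{-t\varphi_1},\psi]\le\psi(1/t)$, together with the elementary comparison $M(A)\le 2\max(2M(A),c_1)$, in order to arrive at a clean bound in $\psi(1/t)$ with no residual $\psi(0)$.
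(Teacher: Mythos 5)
Your proposal is correct and follows essentially the same route as the paper: specialize Corollary \ref{Laub2} to $\varphi_1(\tau)=\tau$, invoke the bound $J[e^{-t\varphi_1},\psi]\le\psi(1/t)-\psi(0)$ from \eqref{q5}, absorb the $\psi(0)M(A)$ term using $M(A)\le 2\max(2M(A),c_1)$, and obtain \eqref{ABC1G} from $c_0=0$ and $C[0;\psi]=0$ in the sectorially bounded holomorphic case. The only cosmetic difference is that the paper groups the terms as $2\max(2M(A),c_1)\{J[e^{-t\varphi_1},\psi]+\psi(0)\}\le 2\max(2M(A),c_1)\psi(1/t)$ rather than substituting the sharp bound for $J$ first, which is the same algebra.
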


\begin{proof}
By (\ref{q5}) and Corollary  \ref{Laub2} applied to a Bernstein
function $\varphi_1(\tau)=\tau$,
\begin{eqnarray*}
\|\psi(A)e^{-t A}\|&\le&
\psi(0)M(A)+2\max(2M(A),c_1)J[e^{-t\varphi_1},\psi]+4M(A)C[c_0;\psi]\\
&\le& 2\max(2M(A),c_1)\{J[e^{-t\varphi_1},\psi]+\psi(0)\}+4M(A)C[c_0:\psi]\\
&\le& 2\max(2M(A),c_1)\psi(1/t)+4M(A)C[c_0;\psi].
\end{eqnarray*}
\end{proof}

As we explained in the beginning of this section, Corollary
\ref{Laub2} leads to a positive answer to Kishimoto-Robinson's
question which is contained in the next statement. Incidentally,
it also partially answers the question from \cite{Laub} and shows
that Bernstein functions map the class of generators of
sectorially bounded holomorphic $C_0$-semigroups into itself. The
statement was proved in \cite{GT} by a different technique.
\begin{cor}\label{Laub3}
Let $\psi$ be  a Bernstein  function and let $-A$ be the generator
of a bounded $C_0$-semigroup satisfying the Yosida condition $Y(c_0,c_1).$ Then
$-\psi(A)$ generates a bounded $C_0$-semigroup on $X$
satisfying the following Yosida condition:
\begin{equation}\label{ABC1G0}
\|\psi(A)e^{-t\psi(A)}\| \le
M(A)(\psi(0)+4)C[c_0;\psi])e^{-t\psi(0)}
+2\max(2M(A),c_1)t^{-1} 
\end{equation}
for every $t>0.$
If $-A$ generates a sectorially bounded $C_0$-semigroup on $X,$
then the same is true for $-\psi(A).$
\end{cor}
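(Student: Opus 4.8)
The plan is to realize the subordinated semigroup as a function of $A$ lying in the bounded completely monotone class, and then feed it into Corollary \ref{Laub2} with the \emph{self-referential} choice $\varphi=\psi$. By Theorem \ref{BochP}(ii) the operator $-\psi(A)$ already generates a bounded $C_0$-semigroup, and for each fixed $t>0$ the function $g_t:=e^{-t\psi}$ is a bounded completely monotone function by Theorem \ref{Bochner}, so that $g_t=\widehat{\mu_t}$ and hence $e^{-t\psi(A)}=g_t(A)$ in the Hille--Phillips calculus by \eqref{entt}. The finiteness of $J[g_t,\psi]$ needed to apply Corollary \ref{Laub2} is exactly the elementary bound $J[e^{-t\psi},\psi]\le t^{-1}$ established in Example \ref{ExT}(a); this is the one place where taking $\varphi=\psi$ pays off, since it converts the abstract hypothesis into the clean rate $1/t$.

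Next I would control the scalar prefactors. Applying Corollary \ref{Laub2} with $\varphi=\psi$ gives
\[
\|\psi(A)e^{-t\psi(A)}\|\le \psi(0)\|e^{-t\psi(A)}\|+2\max(2M(A),c_1)\,J[e^{-t\psi},\psi]+4M(A)e^{-t\psi(0)}C[c_0;\psi].
\]
To turn this into \eqref{ABC1G0} I would bound the norm of the subordinated semigroup by
\[
\|e^{-t\psi(A)}\|\le \int_0^\infty\|e^{-sA}\|\,\mu_t(ds)\le M(A)\,\mu_t([0,\infty))=M(A)e^{-t\psi(0)},
\]
using the subordination formula \eqref{entt} together with $\mu_t([0,\infty))=\lim_{\tau\to0+}\widehat{\mu_t}(\tau)=e^{-t\psi(0)}$ from \eqref{CMonG}. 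Inserting $J[e^{-t\psi},\psi]\le t^{-1}$ and combining the first and third terms then yields precisely
\[
\|\psi(A)e^{-t\psi(A)}\|\le M(A)(\psi(0)+4C[c_0;\psi])e^{-t\psi(0)}+2\max(2M(A),c_1)\,t^{-1},
\]
which is \eqref{ABC1G0}. The range inclusion $e^{-t\psi(A)}(X)=\ran(g_t(A))\subset\dom(\psi(A))$ for $t>0$ is already contained in the inclusion \eqref{Inc} of Theorem \ref{Laub1}. Since $e^{-t\psi(0)}\le1$, the right-hand side of \eqref{ABC1G0} is dominated by $M(A)(\psi(0)+4C[c_0;\psi])+2\max(2M(A),c_1)/t$, so $(e^{-t\psi(A)})_{t\ge0}$ satisfies a Yosida condition of the form \eqref{MMGS11}; by Yosida's theorem \eqref{Y} the bounded semigroup $(e^{-t\psi(A)})_{t\ge0}$ is holomorphic.

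Finally, for the sectorial statement I would use that a sectorially bounded holomorphic generator $-A$ satisfies $Y(0,c_1)$, so $c_0=0$ and hence $C[0;\psi]=0$ by \eqref{Const1}. The estimate then reduces to $\|\psi(A)e^{-t\psi(A)}\|\le M(A)\psi(0)e^{-t\psi(0)}+2\max(2M(A),c_1)t^{-1}$, and multiplying through by $t$ gives
\[
\sup_{t>0}t\|\psi(A)e^{-t\psi(A)}\|\le M(A)\sup_{u>0}ue^{-u}+2\max(2M(A),c_1)=\frac{M(A)}{e}+2\max(2M(A),c_1)<\infty.
\]
Together with $\sup_{t\ge0}\|e^{-t\psi(A)}\|\le M(A)$ and the range inclusion, the Kato-type criterion \cite[Theorem 4.6]{EngNag} shows that $-\psi(A)$ generates a sectorially bounded holomorphic semigroup. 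The whole argument is essentially an assembly of earlier results, so the one genuinely delicate point is this last step: the prefactor $M(A)\psi(0)e^{-t\psi(0)}$ is bounded but does \emph{not} decay like $1/t$, so one cannot absorb it into the $1/t$ term and must instead observe that after multiplication by $t$ the scalar $t\mapsto t\psi(0)e^{-t\psi(0)}$ stays uniformly bounded (by $1/e$, with the term vanishing identically when $\psi(0)=0$).
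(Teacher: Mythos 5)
Your proposal is correct and follows essentially the same route as the paper: apply Corollary \ref{Laub2} with the self-referential choice $\varphi=\psi$, invoke the bound $J[e^{-t\psi},\psi]\le t^{-1}$ from Example \ref{ExT}(a), control $\|e^{-t\psi(A)}\|$ by $M(A)e^{-t\psi(0)}$ (you do this via the total mass $\mu_t([0,\infty))=e^{-t\psi(0)}$ in the subordination formula, the paper via the equivalent splitting $\psi=\psi(0)+\psi_0$), and settle the sectorial case by noting $c_0=0$ forces $C[0;\psi]=0$ while $t\psi(0)e^{-t\psi(0)}$ stays bounded. The extra details you supply (the range inclusion from \eqref{Inc}, the explicit $1/e$ bound) only make explicit what the paper leaves implicit.
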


\begin{proof}
Note that $\psi=\psi(0)+\psi_0$, $\psi_0\in
\mathcal{BF},$ and then
\begin{equation}\label{ref2}
\|e^{-t\psi(A)}\|\le e^{-\psi(0)t}\|e^{-t\psi_0(A)}\|\le M(A), \qquad t >0.
\end{equation}
Now  Corollary \ref{Laub2} and Example \ref{ExT}, $a)$ yield \eqref{ABC1G0}.
If $(e^{-tA})_{t \ge 0}$ is sectorially bounded, then $c_0=0$ and, by definition, $C[c_0;\psi]=0$ as well.
In this case, \eqref{ABC1G0} implies that $t\psi(A)e^{-t\psi(A)}$ is bounded on $(0,\infty).$
Since $(e^{-t\psi(A)})_{t \ge 0}$ is bounded, it is moreover sectorially bounded.
\end{proof}

Next we turn to other applications of Theorem \ref{Laub1} arising
in a general framework for approximation theory of operator
semigroups developed in \cite{GTJFA}. Note that Corollary
\ref{Laub2} and Example \ref{ExT}, \,$c)$  imply directly the next
statement (cf. \cite[Theorem 6.8]{GTJFA}).

\begin{thm}\label{AunboundedG}
Let $\psi$ be a bounded Bernstein  function satisfying
(\ref{CGG}), and let $\varphi\not\equiv \mbox{const}$ be a Bernstein
function. Let $-A$ be the generator of a  sectorially bounded holomorphic
$C_0$-semigroup $(e^{-tA})_{t \ge 0}$ on $X$.  Then
\begin{equation}\label{ABC1G00}
\sup_{t>0}\, \|t \psi(A)e^{-t\varphi(A)}\|\le
2\max(2M(A),c_1)\left[\frac{\psi'(0)}{\varphi'(1)}+\frac{\psi(\infty)-\psi(1)}{\varphi(1)}\right].
\end{equation}
\end{thm}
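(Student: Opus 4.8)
The plan is to read off \eqref{ABC1G00} as a direct specialization of Corollary~\ref{Laub2} to the completely monotone function $g=e^{-t\varphi}$, after using the two standing hypotheses to eliminate the lower-order terms. First I would record that, since $-A$ generates a \emph{sectorially bounded} holomorphic $C_0$-semigroup, one has $\sup_{t>0}\|tAe^{-tA}\|<\infty$; hence the Yosida condition $Y(c_0,c_1)$ holds with the choice $c_0=0$ and $c_1=\sup_{t>0}\|tAe^{-tA}\|$. By the convention in \eqref{Const1} this forces $C[c_0;\psi]=C[0;\psi]=0$.

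Next I would check that the hypotheses of Corollary~\ref{Laub2} are met, namely $J[e^{-t\varphi},\psi]<\infty$ for every $t>0$. This is exactly what Example~\ref{ExT}, $c)$ supplies: since $\psi$ is a bounded Bernstein function satisfying \eqref{CGG} and $\varphi$ is Bernstein, that computation yields both finiteness and the quantitative bound
\[
J[e^{-t\varphi},\psi]\le \left[\frac{\psi'(0)}{\varphi'(1)}+\frac{\psi(\infty)-\psi(1)}{\varphi(1)}\right]\frac{1}{t},\qquad t>0.
\]
The hypothesis $\varphi\not\equiv\mbox{const}$ is what guarantees $\varphi'(1)>0$ and $\varphi(1)>0$ (a completely monotone $\varphi'$ vanishing on an interval would vanish identically), so the right-hand side is finite. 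Taking $g=e^{-t\varphi}$ in Corollary~\ref{Laub2}, and invoking $\psi(0)=0$ from \eqref{CGG} together with $C[c_0;\psi]=0$, the first and third summands on the right-hand side of that corollary disappear, leaving
\[
\|\psi(A)e^{-t\varphi(A)}\|\le 2\max(2M(A),c_1)\,J[e^{-t\varphi},\psi],\qquad t>0.
\]

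Finally I would multiply by $t$ and insert the bound on $J[e^{-t\varphi},\psi]$ just recalled, obtaining
\[
t\,\|\psi(A)e^{-t\varphi(A)}\|\le 2\max(2M(A),c_1)\left[\frac{\psi'(0)}{\varphi'(1)}+\frac{\psi(\infty)-\psi(1)}{\varphi(1)}\right]
\]
for every $t>0$; passing to the supremum over $t>0$ then gives \eqref{ABC1G00}. The argument is essentially immediate once the preparatory results are assembled, so there is no real obstacle; the only points demanding care are the bookkeeping that the sectorially bounded case forces $c_0=0$ (hence $C[c_0;\psi]=0$), and that the normalization \eqref{CGG} is precisely what both annihilates the $\psi(0)$-term and keeps $J[e^{-t\varphi},\psi]$ finite with the stated $1/t$ decay. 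I would also verify that the constant $2\max(2M(A),c_1)$ matches verbatim between Corollary~\ref{Laub2} and the claimed estimate, which it does.
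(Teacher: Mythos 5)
Your proof is correct and is exactly the paper's argument: the paper derives Theorem \ref{AunboundedG} in one line by combining Corollary \ref{Laub2} with the bound $J[e^{-t\varphi},\psi]\le \bigl[\psi'(0)/\varphi'(1)+(\psi(\infty)-\psi(1))/\varphi(1)\bigr]t^{-1}$ from Example \ref{ExT}, $c)$, and your bookkeeping (sectorial boundedness gives $Y(0,c_1)$, hence $C[0;\psi]=0$, while \eqref{CGG} kills the $\psi(0)$-term) is precisely the implicit content of that one-line deduction, made explicit just as the paper itself does in the proof of Corollary \ref{Laub3}.
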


The following corollary of Theorem \ref{AunboundedG} was obtained
in \cite[Corollary 6.9]{GTJFA}.

\begin{cor}\label{C11G}
Let $\varphi$ be a Bernstein function such that
\begin{equation}\label{varphi}
\varphi'(0+)=1,\quad |\varphi''(0+)|<\infty.
\end{equation}
Let $-A$ be the generator of a  sectorially bounded holomorphic
$C_0$-semigroup $(e^{-tA})_{t \ge 0}$ on $X$. Then
\begin{equation*}\label{ABC1G11}
\|(1-\varphi'(A))e^{-t\varphi(A)}\|\le
\frac{2\max(2M(A),c_1)}{t}\left[\frac{|\varphi''(0+)|}{\varphi'(1)}+
\frac{\varphi'(1)}{\varphi(1)}\right],
\end{equation*}
for all $t >0$.
\end{cor}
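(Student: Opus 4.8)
The plan is to realise the operator $1-\varphi'(A)$ as $\psi(A)$ for a suitably chosen bounded Bernstein function $\psi$ and then to invoke Theorem \ref{AunboundedG}. The natural candidate is
\[
\psi:=1-\varphi'.
\]
Since $\varphi\in\mathcal{BF}$, its derivative $\varphi'$ is completely monotone, and therefore $\psi'=-\varphi''$ is completely monotone as well (concretely, $(-1)^n(\varphi')^{(n)}\ge 0$ for all $n$ gives $(-1)^n(\psi')^{(n)}=(-1)^{n+1}\varphi^{(n+2)}\ge 0$); hence $\psi$ has completely monotone derivative. Moreover $\varphi'$ is decreasing with $\varphi'(0+)=1$, so $0\le\varphi'(\tau)\le 1$ for $\tau>0$ and thus $\psi=1-\varphi'\ge 0$, which shows $\psi\in\mathcal{BF}$. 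Writing $\varphi\sim(a,b,\gamma)$ we have $\varphi'(\infty)=b\ge 0$, whence $\psi(\infty)=1-b\le 1$, so $\psi$ is bounded. The normalisation conditions \eqref{CGG} follow directly: $\psi(0+)=1-\varphi'(0+)=0$, and, using $\varphi''\le 0$, $\psi'(0+)=-\varphi''(0+)=|\varphi''(0+)|<\infty$ by hypothesis \eqref{varphi}. Finally $\varphi\not\equiv\mathrm{const}$, since $\varphi'(0+)=1\neq 0$.

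The one remaining preparatory point is the functional-calculus identification. Since $1,\varphi'\in\mathcal{BCM}\subset\mathrm{A}^1_+(\mathbb{C}_+)$, linearity of the HP-calculus gives $\psi(A)=(1-\varphi')(A)=I-\varphi'(A)$, and this is consistent with the extended calculus because $\psi\in\mathrm{A}^1_+(\mathbb{C}_+)$ as a difference of two elements of that algebra. Consequently $\psi(A)e^{-t\varphi(A)}=(1-\varphi'(A))e^{-t\varphi(A)}$, and Theorem \ref{AunboundedG} applies to the pair $(\psi,\varphi)$, yielding
\[
\sup_{t>0}\,\|t(1-\varphi'(A))e^{-t\varphi(A)}\|\le 2\max(2M(A),c_1)\left[\frac{\psi'(0)}{\varphi'(1)}+\frac{\psi(\infty)-\psi(1)}{\varphi(1)}\right].
\]

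It then remains to evaluate the constants. One has $\psi'(0)=|\varphi''(0+)|$ and $\psi(\infty)-\psi(1)=(1-\varphi'(\infty))-(1-\varphi'(1))=\varphi'(1)-b\le\varphi'(1)$, the last inequality being the elementary observation $b\ge 0$. Substituting these into the displayed bound and dividing by $t$ gives exactly the claimed inequality for each $t>0$. I expect the only genuinely nonroutine step to be the first one, namely recognising that $\psi=1-\varphi'$ is a \emph{bounded} Bernstein function satisfying \eqref{CGG}; this is the structural heart of the argument, after which the corollary is a direct specialisation of Theorem \ref{AunboundedG}. Everything else is bookkeeping on the Lévy--Khintchine data of $\varphi$.
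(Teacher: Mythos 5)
Your proposal is correct and follows the paper's own argument exactly: set $\psi=1-\varphi'$, check it is a bounded Bernstein function satisfying \eqref{CGG}, apply Theorem \ref{AunboundedG} to the pair $(\psi,\varphi)$, and conclude via $\psi'(0+)=|\varphi''(0+)|$ and $\psi(\infty)-\psi(1)=\varphi'(1)-\varphi'(\infty)\le\varphi'(1)$. The only difference is that you spell out details the paper leaves implicit (that $\psi\in\mathcal{BF}$ is bounded, that $\varphi\not\equiv\mathrm{const}$, and the identification $\psi(A)=I-\varphi'(A)$ in the HP-calculus), which is entirely consistent with the intended proof.
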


\begin{proof}
Note that by \eqref{varphi} the Bernstein function
$\psi(\tau)=1-\varphi'(\tau), \tau>0,$ is bounded  and satisfies
\eqref{CGG}. Applying  Theorem \ref{AunboundedG} to a Bernstein
function $\varphi$ and a bounded Bernstein function $\psi$ and
taking into account the relations $
\psi'(0+)=-\varphi''(0+)=|\varphi''(0+)|$ and
\[
\psi(\infty)-\psi(1)=\varphi'(1)-\varphi'(\infty)\le \varphi'(1),
\]
we get the assertion.
\end{proof}

\begin{remark}
Note that in \cite[Theorem 6.8]{GTJFA} the second term
$\frac{\psi(\infty)-\psi(1)}{\varphi(1)}$ in the right hand of
\eqref{ABC1G00} has a wrong form $\psi(1)/\varphi(1)$ due to
incorrect evaluation of
$\|\psi'\|_{L^1([a,\infty))}=\int_{a}^{\infty}\psi'(s)\, ds$ in
the last line of the proof. Thus \cite[Eq. (6.12)]{GTJFA} should
take a form of \eqref{ABC1G00}. However, \cite [Corollary 6.9]
{GTJFA} (i.e. Corollary \ref{C11G} here) which was a base for
subsequent estimates in \cite[Section 6]{GTJFA} remains unchanged.
\end{remark}

We finish with relating our estimates to the following
generalization of the moment inequality for generators of bounded
$C_0$-semigroups given in  \cite[Corollary
12.18]{SchilSonVon2010}. As proved in \cite{SchilSonVon2010}, if
$-A$ is the generator of a bounded $C_0$-semigroup on $X$ and
$\psi\in \mathcal{BF}$, then
\begin{equation}\label{Shil}
\|\psi(A)x\|\le
\frac{2e}{e-1}M(A)\psi\left(\frac{\|Ax\|}{2\|x\|}\right),\qquad
x\not=0,\qquad x\in \dom(A).
\end{equation}
If $\psi(\tau)=\tau^\alpha, \alpha \in (0,1),$ then \eqref{Shil}
reduces to the classical moment inequality for fractional powers
of $A$. Using our technique, we obtain the following corollary of
\eqref{Shil}.

\begin{cor}\label{Ok}
Let $-A$ be the generator of a bounded $C_0$-semigroup such that
\begin{equation}\label{condS}
\|tAe^{-tA}\|\le {M}_a,\qquad t\in (0,a],\qquad a\le\infty,
\end{equation}
and $\psi\in \mathcal{BF}$.
Then
\begin{equation}\label{SS2}
\|\psi(A)e^{-tA}\|\le \frac{e}{e-1}M(A)
\max\{2M(A),{M}_a\}\,\psi(1/t),\qquad t\in (0,a].
\end{equation}
\end{cor}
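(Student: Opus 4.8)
The plan is to feed the range vector $x=e^{-tA}y$ into the moment inequality \eqref{Shil} and then to transcribe the resulting scalar estimate into the operator bound \eqref{SS2} using only the scaling property \eqref{max}. Fix $t\in(0,a]$ and $y\in X$. Since \eqref{condS} gives $\|Ae^{-tA}\|\le M_a/t<\infty$, we have $e^{-tA}(X)\subset\dom(A)$, so $x:=e^{-tA}y\in\dom(A)$; excluding the trivial case $e^{-tA}y=0$, the moment inequality \eqref{Shil} applied to $x$ reads
\[
\|\psi(A)e^{-tA}y\|\le \frac{2e}{e-1}\,M(A)\,\|e^{-tA}y\|\,\psi\!\left(\frac{\|Ae^{-tA}y\|}{2\|e^{-tA}y\|}\right),
\]
the factor $\|e^{-tA}y\|=\|x\|$ being the norm appearing on the right of \eqref{Shil}.

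Next I would tame the argument of $\psi$ by \eqref{max}. From $\psi(c\tau)\le c\psi(\tau)$ for $c\ge1$ one gets, for every $r>0$, the elementary bound $\psi(r)\le\max(1,tr)\,\psi(1/t)$ (split according to $r\le 1/t$ or $r>1/t$). Taking $r=\|Ae^{-tA}y\|/(2\|e^{-tA}y\|)$ and absorbing the prefactor $\|e^{-tA}y\|$ from the previous display, the crucial simplification is
\[
\|e^{-tA}y\|\,\max\!\left(1,\frac{t\|Ae^{-tA}y\|}{2\|e^{-tA}y\|}\right)=\max\!\left(\|e^{-tA}y\|,\ \tfrac{t}{2}\|Ae^{-tA}y\|\right).
\]
Both quantities under this maximum are now controlled uniformly in $y$: boundedness of the semigroup gives $\|e^{-tA}y\|\le M(A)\|y\|$, while \eqref{condS} gives $\tfrac{t}{2}\|Ae^{-tA}y\|\le \tfrac12 M_a\|y\|$ (this is the one place the restriction $t\le a$ is used). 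Hence the maximum is at most $\tfrac12\max\{2M(A),M_a\}\|y\|$, and combining the three estimates produces
\[
\|\psi(A)e^{-tA}y\|\le \frac{e}{e-1}\,M(A)\,\max\{2M(A),M_a\}\,\psi(1/t)\,\|y\|,
\]
which is \eqref{SS2} upon taking the supremum over $\|y\|=1$.

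The step I expect to be the real point is the passage through the maximum. The tempting move---bounding the argument $\|Ae^{-tA}y\|/(2\|e^{-tA}y\|)$ of $\psi$ by a constant multiple of $1/t$---is not available: this ratio is a genuine \emph{effective spectral radius} on the range of $e^{-tA}$ and need not be $O(1/t)$, as the scalar model $A=\lambda$ already shows, where it equals $\lambda/2$ independently of $t$. The semigroup decay is therefore invisible to the argument of $\psi$ and must be recovered from the prefactor $\|e^{-tA}y\|$ that \eqref{Shil} supplies; it is precisely the balanced quantity $\max(\|e^{-tA}y\|,\tfrac t2\|Ae^{-tA}y\|)$, and not either factor in isolation, that admits a bound uniform in $y$. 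Once this is recognized, the estimate is immediate and the stated constant falls out exactly.
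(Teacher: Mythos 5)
Your proof is correct and follows essentially the same route as the paper's: substitute $x=e^{-tA}y$ into \eqref{Shil}, use the subadditivity-type bound \eqref{max} to reduce the argument of $\psi$ to $1/t$, and control the resulting maximum $\max\bigl(\|e^{-tA}y\|,\tfrac{t}{2}\|Ae^{-tA}y\|\bigr)$ via $M(A)$ and \eqref{condS}. The only (immaterial) difference is the order of operations --- the paper first inserts the bound $\|Ae^{-tA}y\|\le (M_a/t)\|y\|$ inside $\psi$ and then rescales, whereas you rescale first and bound afterwards --- and both yield the identical constant.
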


\begin{proof}
Setting in (\ref{Shil})  $x=e^{-tA}y$, $y\in X$, $t\in (0,a]$ and
using (\ref{condS}) and  (\ref{max}), we obtain that
\begin{eqnarray*}
\|\psi(A)e^{-tA}y\|
&\le& \frac{2e}{e-1}M(A)\|e^{-tA}y\|\psi\left(\frac{M_a\|y\|}{2t\|e^{-tA}y\|}\right)\\
&\le&  \frac{2e}{e-1}M(A) \|e^{-tA}y\|\max\left\{1,\frac{M_a\|y\|}{2\|e^{-tA}y\|}\right\}\,\psi(1/t)\\
&=& \frac{e}{e-1}M(A) \max\left\{2\|e^{-tA}y\|,M_a\|y\|\right\}\,\psi(1/2t)\\
&\le& \frac{e}{e-1}M(A)\max\{2M(A),M_a\}\,\psi(1/t)\|y\|,
\end{eqnarray*}
that is (\ref{SS2}) holds.
\end{proof}

As an illustration of Corollary \ref{Ok}, note that if
$\psi(\tau)=\log(1+\tau)$ then Corollary \ref{Ok} yields the
estimate
\[
\sup_{t\in (0,1/e]}\,\frac{\|\log(1+A)e^{-tA}\|} {\log
(1/t)}<\infty.
\]
proved originally in \cite[Proposition 2.7]{Okazawa}.

Finally, we note that it is possible to develop an approach to the permanence problems from \cite{Rob} and \cite{Laub}
different from the ones in \cite{GT} and in the present note. This approach based on direct resolvent estimates for
Bernstein functions of semigroup generators is worked out in \cite{BGTo}. While it allows one to get sharp estimates
for subordinated semigroups  (and their holomorphy sectors), it is much more involved than the arguments in this article.

\end{document}